\documentclass{article}

\usepackage{arxiv}

\usepackage[utf8]{inputenc} % allow utf-8 input
\usepackage[T1]{fontenc}    % use 8-bit T1 fonts
\usepackage[unicode]{hyperref}       % hyperlinks
\usepackage{url}            % simple URL typesetting
\usepackage{booktabs}       % professional-quality tables
\usepackage{amsfonts}       % blackboard math symbols
\usepackage{nicefrac}       % compact symbols for 1/2, etc.
\usepackage{lipsum}
\usepackage{amsmath,amsthm,amssymb} % Useful AMS stuff
\usepackage{graphicx}
\usepackage{setspace}
\usepackage[normalem]{ulem}     %for strike through font
\usepackage{cite}    %makes citations compact

\usepackage{tikz}

\usetikzlibrary{arrows,decorations.pathmorphing,backgrounds,positioning,fit,petri}

\makeatletter
\newcommand{\ruleset}[1]{\textsc{#1}}
\newcommand{\CN}[2]{\ruleset{CN\ensuremath{(#1,#2)}}}

%%%%%%%%
\newtheorem{theorem}{Theorem}[section]
\newtheorem{proposition}[theorem]{Proposition}

\newtheorem{lemma}[theorem]{Lemma}
\newtheorem{definition}[theorem]{Definition}
\newtheorem{remark}[theorem]{Remark}

\newcommand{\tref}[1]{Theorem~\textup{\ref{#1}}}

\newcommand{\cref}[1]{Corollary~\textup{\ref{#1}}}

\newcommand{\fref}[1]{Figure~\textup{\ref{#1}}}

%Trying to fix the font

%%%%%%%%%%%

%\def\p{\ensuremath{\mathbf{p}}}
\def\p{\ensuremath{{\boldsymbol p}}}

\def\P{\ensuremath{\mathcal{P}}}
\def\N{\ensuremath{\mathcal{N}}}

\def\O{\ensuremath{\mathbf{0}}}
\def\x{\ensuremath{\mathbf{\underline{x}}}}
\def\x13{\ensuremath{x_1x_2x_3}}
\def\xx14{\ensuremath{x_1x_2x_3x_4}}
\def\xp13{\ensuremath{x_1'x_2x_3}}

\usepackage{xcolor}

\usepackage{lscape,pdflscape}
\usepackage{multirow}
\usepackage{subcaption}
\usepackage{caption}

\usepackage{array}
\usepackage{tabulary}
\newcolumntype{K}[1]{>{\centering\arraybackslash}p{#1}}
%%%%%%%%%%%%%%%%%%%%%%%

\title{Circular Nim Games \CN{7}{4}}

\author{
  Matthieu Dufour\\
  Département de Mathématiques\\
  Université du Québec à Montréal, Canada \\  \texttt{dufour.matthieu@uqam.ca} \\
  %% examples of more authors
   \And
 Silvia Heubach \\
  Department of Mathematics\\
  California State University, Los Angeles, CA 90032 \\  \texttt{sheubac@calstatela.edu}
    \And 
  Anh Vo \\
 % Department of Mathematics\\
  %California State University, Los Angeles, CA 90032 \\
  \texttt{anhvo1979@gmail.com}}

\begin{document}
\maketitle
\onehalfspacing
\begin{abstract}

Circular Nim is a two-player impartial combinatorial game consisting of $n$ stacks of tokens placed in a circle. A move consists of choosing $k$ consecutive stacks and taking at least one token from one or more of the stacks. The last player able to make a move wins. The question of interest is: Who can win from a given position if both players play optimally? In an impartial combinatorial game, there are only two types of positions. An \N-position is one from which the next player to move has a winning strategy. A \P-position is one from which the next player is bound to lose, no matter what moves s/he makes. 
Therefore, the question who wins is answered by identifying the \P-positions. 
We will prove results on the structure of the \P-positions for $n = 7$ and $k = 4$, extending known results for other games in this family. The interesting feature of the set of \P-positions of this game is that it splits into different subsets, unlike the structure for the known games in this family. 
\end{abstract}

% keywords can be removed
\keywords{Combinatorial Games \and Variation of Nim \and Circular Nim}

\section{Introduction}

The game of Nim has been played since ancient times, and the earliest European references to Nim are from the beginning of the sixteenth century. Its current name was coined by Charles L. Bouton of Harvard University, who also developed the complete theory of the game in 1902 \cite{Bouton}. Nim plays a central role among impartial games as any such game is equivalent to a Nim stack \cite{WinningWays}. Many variations and generalizations of Nim have been analyzed. They include subtraction games, Wythoff’s game, Nim on graphs and on simplicial complexes, Take-away games, Fibonacci Nim, etc. \cite{AlbNowWol,rEeS96,tsF98,tsF,dG74,rfG74,dH2010,ehM10,ajS,rS37,mjW63,waW07}. We will study a particular case of another variation, called  Circular Nim, which was introduced in \cite{mDsH09}.

\begin{definition}
In Circular Nim, $n$ stacks of tokens are arranged in a circle. A move consists of choosing $k$ consecutive stacks and then removing at least one token from at least one of the $k$ stacks. The last player who is able to make a legal move wins. We denote this game by $\CN{n}{k}$.
\end{definition}

Circular Nim is an example of a combinatorial game, in which the two players alternately move. There is a set, usually finite, of possible \emph{positions} of the game. The rules of the game specify for both players and each position the legal moves to other positions, which are called \emph{options}. We say a position in a game is a \emph{terminal position} if no moves are possible from it. If the rules make no distinction between the players, that is, both players have the same options to move to, then the game is called \emph{impartial}; otherwise, the game is called \emph{partisan}. The game ends when a terminal position is reached.
Under the \emph{normal-play rule}, the last player to move wins. Otherwise, under the \emph{misère-play rule}, the last player to move loses. More background on combinatorial games can be found in \cite{AlbNowWol,WinningWays,tsF}.

Since we have complete knowledge of the game, the players are assumed to play optimally. Thus, we can study the question: \textbf{“Which player will win the game when playing from a given position?”} Impartial games are easier to analyze than partisan games as they have only two types of positions (= outcomes classes) \cite{tsF}. The outcome classes are described from the standpoint of which player will win when playing from the given position. An \N-position indicates that the \textbf{N}ext player to play from the current position can win, while a \P-position indicates that the \textbf{P}revious player, the one who made the move to the current position, is the one to win. Thus, the current player is bound to lose from this position, no matter what moves she or he makes. A winning strategy for a player in an $\N$-position is to move to one of the $\P$-positions.

\begin{definition} 
In a Circular Nim game, a \emph{position} is represented by the vector $\p = (p_1,p_2, \dots{},p_n)$ of non-negative entries indicating the heights of the stacks in order around the circle. We denote an option of $\p$ by $\p' =(p'_1,p'_2,\dots{},p'_n)$, and use the notation $\p \rightarrow \p'$ to denote a legal move from $\p$ to $\p'$.
\end{definition}

Note that a position in Circular Nim is determined only up to rotational symmetry and reflection (reading the position forward or backward). The only terminal position of $\CN{n}{k}$ is $\O:=(0, 0, \dots{} , 0)$, for all $n$ and $k$. In addition, we do not have to play on all $k$ stacks that are selected. 

\fref{fig:Label8-CN(7,4)} shows an example of the position $\p = (1, 7, 5, 6, 2, 3, 6) \in \CN{7}{4}$ and one possible move, to option $\p' = (0, 1, 5, 4, 2, 3, 6)$, where the four stacks enclosed by squares are the stacks that were selected for play. Note that no tokens were taken from the stack of height 5. 

\begin{figure}[htbp]
\begin{center}
\includegraphics[scale=0.60]{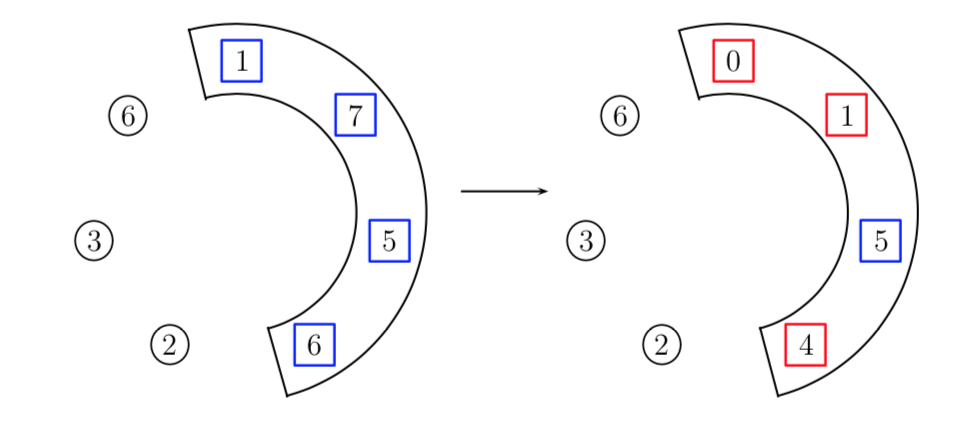}
  \caption{A move from $\p = (1,7,5,6,2,3,6)$ to $\p' = (0,1,5,4,2,3,6)$.}
  \label{fig:Label8-CN(7,4)} %%  \label needs to follow \caption
  \end{center}
\end{figure}

Dufour and Heubach \cite{mDsH09} proved general results on the set of \P-positions  of \CN{n}{1}, \CN{n}{n}, and \CN{n}{n-1} for all $n$. These general cases cover all games for $n \leq 3$. They also gave results for all games with $n \leq 6$ except for \CN{6}{2}, and also solved the game \CN{8}{6}. In this paper, the main  result is on the \P-positions for %\CN{7}{3} and 
\CN{7}{4}. One sign of the increase in complexity as $n$ and $k$ increase is that, unlike the results for the cases already proved, we no longer can describe the set of \P-positions as a single set, which makes the proofs more complicated.  

To prove our main result, we use the following theorem.

\begin{theorem} [Theorem 1.2, \cite{tsF}] \label{thm:howtoprove}
Suppose the positions of a finite impartial game can be partitioned into mutually exclusive sets A and B with these properties: 
\begin{enumerate}
    \item[\textnormal {I.}] Every option of a position in A is in B; 
    \item[\textnormal {II.}] Every position in B has at least one option in A; and 
    \item[\textnormal {III.}] The terminal positions are in A.
\end{enumerate}
Then A is the unique set of \P-positions and B is the unique set of \N-positions. 
\label{thm: 05}
\end{theorem}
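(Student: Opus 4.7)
The plan is to prove both statements by strong induction on a well-founded rank assigned to each position. Since the game is finite, every play sequence must terminate, so for each position $\p$ the quantity $\ell(\p)$, defined as the length of the longest legal play sequence starting at $\p$, is a well-defined non-negative integer; moreover $\ell(\p) = 0$ precisely when $\p$ is terminal, and $\ell$ strictly decreases along every legal move. This justifies strong induction on $\ell(\p)$.

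For the base case, a terminal position is, by the normal-play convention, a \P-position (the player to move has no legal move and loses), and it belongs to $A$ by hypothesis III. For the inductive step, assume the correspondence between $A$ and the set of \P-positions has been established for all positions of smaller rank. If $\p \in A$, then by hypothesis I every option $\p'$ lies in $B$; by the inductive hypothesis each such $\p'$ is an \N-position, so every move from $\p$ lands in a position winning for the opponent, which is the defining property of a \P-position. If $\p \in B$, then by hypothesis II at least one option $\p'$ lies in $A$; by induction $\p'$ is a \P-position, so the player to move can steer the game to a position losing for the opponent, making $\p$ an \N-position.

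Uniqueness then follows immediately: since $A$ and $B$ partition the set of all positions and the inductive conclusion shows that $A$ consists exactly of \P-positions while $B$ consists exactly of \N-positions, and since the classes \P\ and \N\ are disjoint by definition, $A$ must coincide with the set of all \P-positions and $B$ with the set of all \N-positions. Any other partition $(A',B')$ satisfying I--III would, by the identical argument, yield the same pair, establishing uniqueness. The only subtle point of the whole proof is the availability of the rank function $\ell$; finiteness of the game is exactly what makes $\ell$ well-defined, and for loopy or infinite games the statement fails in general, so in a formal write-up this is the step where I would spend the most care.
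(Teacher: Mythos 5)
The paper does not prove this statement at all: it is quoted verbatim from Ferguson's notes (Theorem 1.2 of \cite{tsF}) and used as a black box, so there is no in-paper proof to compare against. Your argument---strong induction on the length of the longest play sequence, with the terminal positions as base case and the two implications $A\Rightarrow\P$, $B\Rightarrow\N$ combining with the partition to give uniqueness---is the standard and correct proof of the cited result.
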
 

We use \tref{thm: 05} to show that the conjectured set of \P-positions satisfies the properties of set A and its complement. Property (III) is the easiest one to show, while Property (II) is usually the most difficult part to prove because one has to find a legal move from every \N-position to some \P-position. We are now ready to start our analysis of \CN{7}{4}.

\section{The Game \CN{7}{4}}
\label{sec:CN74}

 In the discussion of \CN{7}{4}, we will use the  generic position $\p = (a, b, c, d, e, f, g)$.  Since positions of \CN{7}{4} are only determined up to rotation and reflection (reading clock-wise or counter clock-wise), we will assume that in a generic position $a$ is a minimum.  Figure~\ref{fig:Pos-CN(7,4)} shows a generic position $(a, b, c, d, e, f, g)$  where  the minimum stack in rendered in red (gray). Note that to avoid cumbersome notation, we will use the label, say $a$, to refer to either the stack itself or to its number of tokens. Which one it is will be clear from the context.

\begin{figure}[htbp]
  \centering
  \includegraphics[scale=0.3]{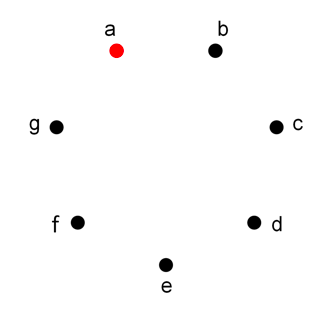}
  \caption{A generic position in the game \CN{7}{4}, with $a = \min(\p)$.}
  \label{fig:Pos-CN(7,4)} %%  \label needs to follow \caption
\end{figure}

Here is our main result,  with a visualization of the \P-positions of \CN{7}{4} given in \fref{fig: S}.  \\

\begin{theorem}\label{thm:CN(7,4)}
Let $\p=(a,b,c,d,e,f,g)$ with $a = \min(\p)$.
The \P-positions of \CN{7}{4} are given by $S$ = $S_1 \cup S_{2} \cup S_3 \cup S_{4}$, where: 

\begin{itemize}
    \item $S_1 = \{\p \mid a = b = 0, \> c = g>0,\> d + e + f = c\}.$
    \item $S_2 = \{\p \mid \p=(a,a,a,a,a,a,a)\}.$
    \item $S_3 = \{\p \mid a = b, \>c = g, \>d = f,  \> a + c = d + e, \> \> 0< a < e\},$ and
    \item $S_4 = \{\p \mid a = f, \> b + c = d + e = g + a, \>a < \min\{b,\>e,\},a < \max\{c,d\}\}.$
\end{itemize}
\label{thm: 06}
\end{theorem}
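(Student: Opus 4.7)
The strategy is to apply \tref{thm: 05} with $A=S$ and $B$ equal to the complement of $S$ among positions of $\CN{7}{4}$. Property (III) is immediate: the terminal position $\O=(0,\dots,0)$ lies in $S_2$ (the case $a=0$). The real content of the theorem is therefore in verifying Properties (I) and (II).

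For Property (I) I would fix each source subset $S_i$ and each target subset $S_j$ and show that no legal move takes an element of $S_i$ to an element of $S_j$, yielding $4\times 4=16$ cases. Several are short: starting in $S_2=\{(a,\dots,a)\}$, the three stacks outside the chosen window of four consecutive stacks remain equal to $a$, which, when $a>0$, rules out landing in $S_1$ (whose positions contain two adjacent zero stacks together with strictly larger stacks), and forces tightly constrained equalities that cannot all hold simultaneously in $S_3$ or $S_4$. Since tokens cannot be added, any pair of adjacent zeros in the source persists, so from $S_1$ the only candidate target is $S_1$ itself, and one then uses the sum condition $d+e+f=c$ to derive a contradiction. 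For the remaining pairs I would manipulate the defining equations and exploit rotational and reflective symmetries of the game to cut down on the window-choice casework.

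Property (II) is where the work lies. I would first rotate and reflect so that $a=\min(\p)$ sits in position $1$ and then organise the analysis by the shape of $\p$. A natural top-level split is by the value of $a$ and by how close $\p$ already is to each $S_i$: if $\p$ partially satisfies one of the defining equation systems (for instance, it already has the mirror symmetry $c=g$, $d=f$ of $S_3$, or has $a=f$ together with one of the two sum identities defining $S_4$), I would target that particular $S_i$ and construct an explicit four-stack move that repairs the missing equations by a single reduction. When $\p$ has no such near-structure, I would use the freedom of the four-stack window to \emph{create} the structure: producing two adjacent zeros together with a balanced triple aims at $S_1$, and flattening four entries down to the minimum aims at $S_2$. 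Throughout, each proposed move must be checked for legality, i.e.\ that it strictly reduces at least one stack and never requires an increase on any of the four selected stacks.

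I expect the principal obstacle to be the case analysis in Property (II). Because $S$ is a union of four structurally different families rather than a single pattern, there is no uniform recipe for picking the target: one must diagnose from the shape of $\p$ which $S_i$ is reachable and tailor the move accordingly. A secondary subtlety lies on the boundaries between the $S_i$ (for instance, positions with $a=0$ that almost belong to several subsets simultaneously), which must be handled carefully to avoid omissions or overlaps. My plan is to factor out small auxiliary lemmas covering the recurring sub-configurations so that each leaf case of the main argument reduces to an explicit, easy-to-verify move rather than a lengthy computation.
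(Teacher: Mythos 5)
Your top-level strategy coincides with the paper's: invoke \tref{thm: 05} with $A=S$, note $\O\in S_2$ for Property (III), and then grind through Properties (I) and (II). But as written this is a plan, not a proof; essentially all of the mathematical content of the theorem lives in the two steps you defer, and the organizing principles you propose for them are not yet shown to work. For Property (I), your claim that from $S_1$ ``the only candidate target is $S_1$ itself'' because adjacent zeros persist is too quick: the terminal position $\O\in S_2$ also has adjacent zeros, so you must separately rule out a move from $S_1$ to $\O$ (it fails because $c$, $g$, and at least one of $d,e,f$ are positive and cannot all be emptied within four consecutive stacks). Likewise ``tightly constrained equalities that cannot all hold simultaneously'' is an assertion, not an argument; the paper replaces it with a reusable structural observation (Remark 2.5(7)): if a move preserves the location of the overlap stack in a position satisfying the common-sum requirement of $S_3\cup S_4$, then one sum is unchanged while another strictly decreases, so the common sums cannot be re-established. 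Without some such lemma, the $16$ source--target cases of Property (I) each require a bespoke computation, and several (e.g.\ $S_3\to S_4$, $S_4\to S_4$) are genuinely delicate because the new minima and the new overlap stack can sit at any of several rotated locations.

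For Property (II), your proposed decomposition --- diagnose which $S_i$ the position is ``close to'' and repair it --- is where the real gap lies. A generic $\N$-position need not partially satisfy any of the four equation systems, and the claim that a single four-stack reduction can always manufacture one of them is precisely the theorem's hard content. The paper instead partitions $S^c$ by a feature of the position that dictates which targets are even reachable: at least two zeros (move to $S_1\cup S_2\cup S_4$, organized by the gap between the zeros), exactly one zero (mostly $S_1$, one subcase to $S_4$), and no zeros (organized by whether the two maxima are antipodal, with moves to $S_1$, $S_3$, or $S_4$ governed by a minimum of several candidate sums). It also isolates two auxiliary devices that you would need analogues of: the Valley Lemma (five consecutive stacks $p_1\dots p_5$ with $p_2+p_3+p_4\le\min\{p_1,p_5\}$, or the shallow variant, always admit a move to $S_1$) and $\CN{3}{2}$-equivalence (positions with enough zeros decompose into three blocks on which one plays Nim-like equalization). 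Until you exhibit the explicit target position and verify legality (no stack increases, at most four consecutive stacks touched) in each leaf case --- including the boundary cases you correctly flag --- the proposal does not yet establish the theorem.
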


\begin{figure}[ht]
     \centering
     \begin{subfigure}[b]{0.22\textwidth}
         \centering
         \includegraphics[width=.8\textwidth]{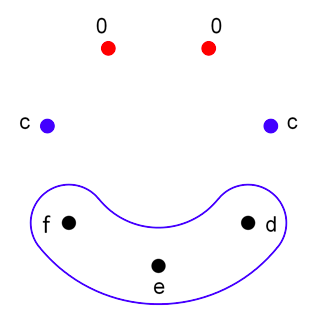}
         \caption{$\P$-positions in $S_1$}
         \label{fig: S1}
     \end{subfigure}
     \hfill
     \begin{subfigure}[b]{0.22\textwidth}
         \centering
         \includegraphics[width=0.8\textwidth]{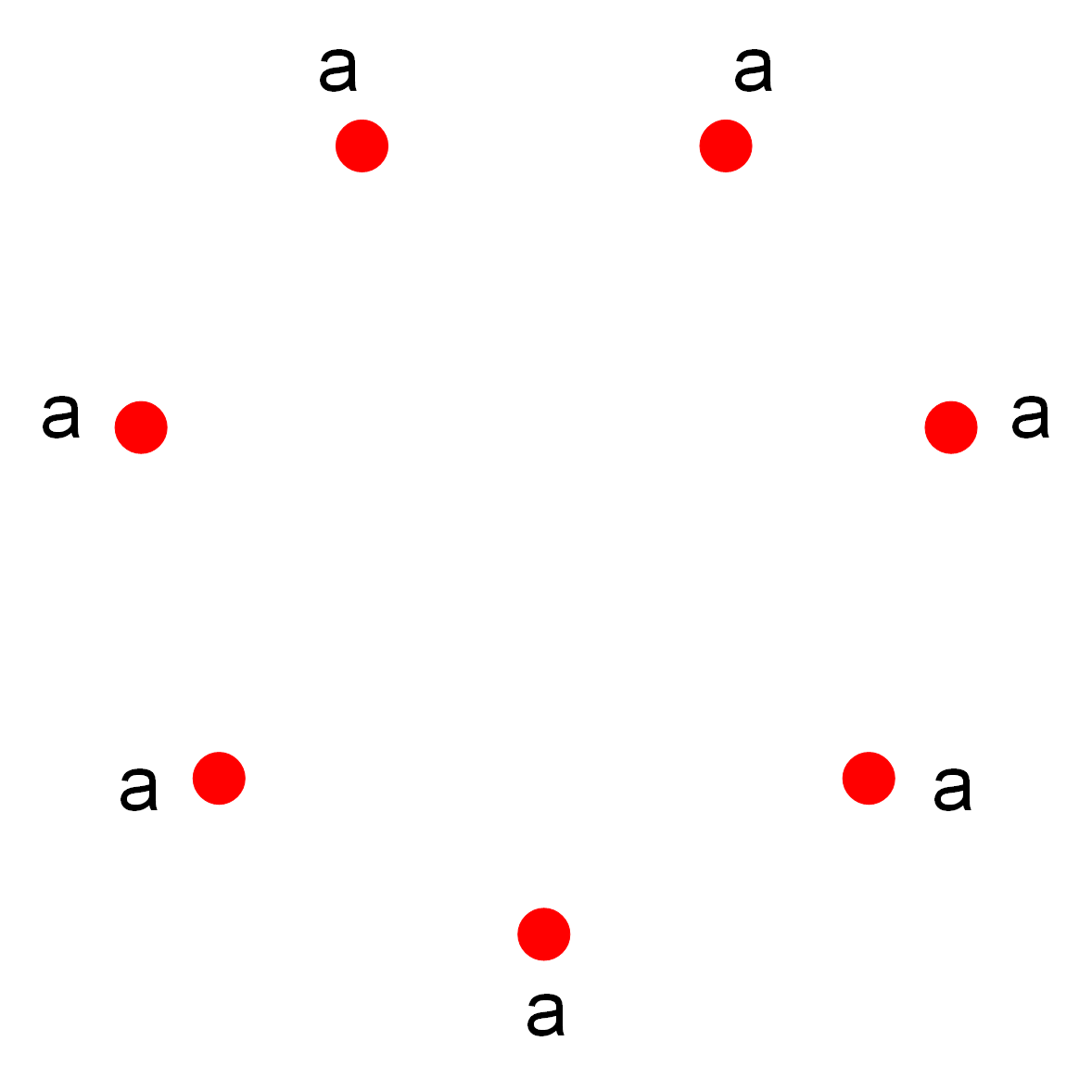}
         \caption{$\P$-positions in $S_2$}
         \label{fig: S2}
     \end{subfigure}
     \hfill
     \begin{subfigure}[b]{0.22\textwidth}
         \centering
         \includegraphics[width=0.8\textwidth]{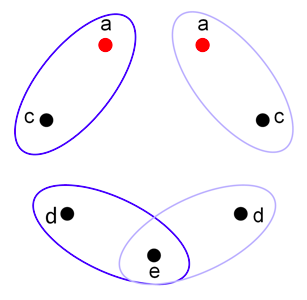}
         \caption{$\P$-positions in $S_3$}
         \label{fig: S3}
     \end{subfigure}
     \hfill
     \begin{subfigure}[b]{0.22\textwidth}
         \centering
         \includegraphics[width=0.8\textwidth]{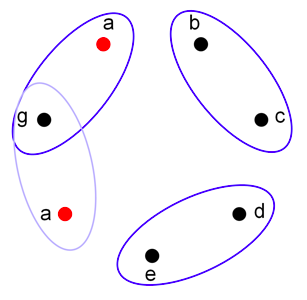}
         \caption{$\P$-positions in $S_4$}
         \label{fig: S4}
     \end{subfigure}

        \caption{Visualization of the \P-positions of \CN{7}{4}. The sums of groups of stacks that are encircled are equal to each other or equal to the blue stack heights.}
        \label{fig: S}
\end{figure}

Note that all the subsets of $S$ are disjoint. The condition $a < \max\{c,d\}$ of $S_4$ prohibits a pair of adjacent minima, which all other sets have. Also, $S_2$ is disjoint from the other sets since they all have a strict inequality  condition. Finally,   $ S_1 \cap S_3 = \varnothing$ since $a>0$ for $ S_3$.
\\

Condition (III) of Theorem~\ref{thm:howtoprove} is satisfied because the only terminal position is $\O \in S_2$. We deal with the other two conditions in the two subsections below. The following  definitions and remarks will aid us in the proofs of our results. Note that we assume $a$ to be the minimum, not necessarily unique. In the proofs, we will denote the minimal and maximal values of a target position by $m$ and $M$, respectively. 

The positions in $S$ have specific geometric features which we will name to make the proofs easier to read. 

\begin{definition}
A \emph{tub configuration} $xmmx$ is a set of four adjacent stacks that consists of a pair of adjacent minima (of the position) surrounded by two stacks of equal height. There are three other stacks in the position, which we denote by $\x13$ unless we know the actual stack heights.  The opposite of a tub configuration is a \emph{peak} $xXx$, a set of three adjacent stacks with $x <X$. If $x$ and $X$ are the  minimum and the maximum, respectively, of the position,  then we call this configuration a \emph{minmax peak}. A position with a peak contains four other stacks  which we denote by $\xx14$ unless we know the actual stack heights. Finally, there is the \emph{common sum requirement}, in which pairs of consecutive stacks have to have the same sum, with one \emph{overlap stack} contributing to two sums.
\end{definition} With these definitions, we can make the following remarks regarding the specific features of each subset of $S$.

\begin{remark}\label{rem:props} \hfill
\begin{enumerate} 
\item \label{rem:props1}  In $S_3$, $a<e$ and the sum conditions imply that $c>\max\{a,d\}$ and $c \ge e$. 
\item \label{rem:S4vals} In $S_4$, we have the following inequalities: $a < \min\{b,e\}$ implies that $g> \max\{c,d\}$ due to the common sum requirement. Furthermore, $g>a$.
\item \label{rem:tubS1S3} Positions in $S_1\cup S_3$ contain a tub configuration. We have 
\begin{itemize}
    \item $\p \in S_1$ needs to satisfy the \emph{tri-sum condition}:  $x_1+x_2+x_3 = x$
    \item $\p \in S_3$ needs to satisfy: $x_1=x_3$ and $x_2+x_3 = a+x$.
\end{itemize}
\item \label{rem:tubcreate}  When trying to move to $\p' \in S_1 \cup S_3$, we can  always create a tub configuration with a new smaller minimum $m'<a$ by playing on three adjacent stacks as follows:  Create a pair of stacks whose common height is a new minimum $m' < a =\min(\p)$. Reduce the larger of the two stacks adjacent to the pair to the height of the smaller. This height gives the value of $x$ in the tub configuration $xm'm'x$.  Any remaining play has to occur on $x_1$, the stack adjacent to the stack that was decreased to $x$. In labeling the remaining three stacks, we are reading the position  starting from the minima in the direction of the stack whose height was reduced to $x$. 
Note that we cannot play on $x_2$ and $x_3$, so for $S_1$, the tri-sum $x_1+x_2+x_3 \ge x_2+x_3$, and for $S_4$, the sum  $x_2+x_3$ cannot be adjusted. 
\item \label{rem:peak}  Positions in $ S_4$ always contain a minmax peak, while positions in $S_3$ may contain a peak. In either case, the remaining four stacks have to satisfy  that $x_1+x_2=x_3+x_4=x+X$. 
 \item  \label{rem:S4} Positions in $S_4$ have either two or  three minima. If  $c=a=m$,  then $\p = (m,M,m,d,e,m,M)$, that is, two  maxima alternate with three minima. Otherwise, the two minima are separated by the maximum.
 \item \label{rem:comsum} Positions in $S_3 \cup S_4$ have the {\em common sum requirement}. Positions in $S_2$ automatically satisfy the common sum requirement. It is relatively easy to see that if we keep the \underline{same} overlap stack, then play on any 4 consecutive stacks from a position $\p$ with common sums leaves at least one sum unchanged, while at least one other sum is decreased, so the common sum requirement cannot be satisfied in $\p'$. Specifically, there is no move from $S_2$ to $S_3 \cup S_4$  since any stack is an overlap stack in $S_2$.
\end{enumerate}
\
\end{remark}

We are now ready to embark on the proofs.

\subsection{There is no move from \texorpdfstring{$\p \in S$ to $\p' \in S$}{\bf{p} in S to \bf{p}' in S}}\label{sub:PnoP}

\begin{proposition} 
If $\p \in S$, then $\p' \notin S$.
\label{prop:noloselose}
\end{proposition}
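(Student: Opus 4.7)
The plan is to show that any move $\p \to \p'$ with $\p \in S_i$ and $\p' \in S_j$ leads to a contradiction, by a case analysis over all pairs $(i,j) \in \{1,2,3,4\}^2$. Three standing facts drive every case: a legal move leaves three consecutive stacks untouched and never increases any stack (so zeros persist and the minimum cannot grow); \rref{rem:comsum} forbids moves that preserve a common-sum structure with the same overlap stack; and the strict inequalities collected in \rref{rem:props} pin down where the unchanged trio of stacks may sit.

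First I would dispose of $\p \in S_2$. Any move strictly reduces at least one stack while leaving three consecutive stacks at value $a$, so $\p' \notin S_2$ (uniformity fails), $\p' \notin S_1$ (adjacent zeros among the unchanged triple would force $a = 0$, making $\p = \O$, which admits no move), and $\p' \notin S_3 \cup S_4$ directly by \rref{rem:comsum} since every stack of $\p$ is an overlap. For $\p \in S_1$ the two zero stacks at positions $1$ and $2$ persist in $\p'$: this rules out $\p' \in S_2$ (which would force $\p' = \O$, impossible because any single window leaves a positive stack among $d, e, f$ unchanged, using $d + e + f = c > 0$), $\p' \in S_3$ (which requires a positive minimum), and $\p' \in S_4$ (which forbids adjacent minima). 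For $\p' \in S_1$, enumerating the seven windows $W$ shows that maintaining $c' = g'$ together with $d' + e' + f' = c'$ and monotonicity leaves no room to alter any stack, so $\p' = \p$, contradicting legality.

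The bulk of the work is $\p \in S_3 \cup S_4$. Moves into $S_2$ are obstructed because the three consecutive unchanged stacks of $\p'$ must all equal a common value $a' \le a$, yet the strict inequalities $c > a$ in $S_3$ or $g > a$ in $S_4$ (from \rref{rem:props}, items~\ref{rem:props1} and~\ref{rem:S4vals}), combined with the source's sum identity, contradict this in each of the seven windows. Moves into $S_1$ require creating two adjacent zeros, so the window $W$ must contain two adjacent positive source stacks reduced to $0$; a short computation using the target's outer equality $c' = g'$ and sum identity $d' + e' + f' = c'$ then eliminates each remaining configuration.

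The main obstacle is the block $\{S_3, S_4\} \to \{S_3, S_4\}$. Here \rref{rem:comsum} applies cleanly only when the target's overlap stack coincides with the source's, so one cannot simply cite the remark: for each of the four combinations and each window $W$, I would compare the common-sum pairs of the source that monotonic reduction must preserve against those of the target that must hold, and extract a contradiction. This is a finite but tedious check, and it is the principal place where the cross-set structure of $S$ --- the central novelty emphasized in the abstract --- must be confronted directly.
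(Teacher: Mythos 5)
Your overall strategy coincides with the paper's: both arguments run through all ordered pairs of subsets, both exploit the fact that a move leaves three consecutive stacks untouched and never increases a stack, both use \rref{rem:props}(\ref{rem:comsum}) to kill moves that keep the same overlap stack, and your easy cases ($S_1$ as source, most of the $S_2$ row) are handled correctly and in essentially the paper's way. The problem is that what you have written is a plan rather than a proof. The cases you defer --- moves from $S_3\cup S_4$ into $S_1\cup S_3$ (``a short computation \dots{} eliminates each remaining configuration'') and the block $\{S_3,S_4\}\to\{S_3,S_4\}$ (``a finite but tedious check'') --- are exactly where the content of the proposition lives, and they do not follow from the three standing facts you list without further work. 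The paper discharges them via the tub and peak machinery of \rref{rem:props}(\ref{rem:tubcreate}) and (\ref{rem:peak}): for each admissible placement of the new minima it identifies the single stack $x_1$ that can still be adjusted, computes the frozen partial sum $x_2+x_3$ (or the frozen common sums), and shows it already exceeds the target value $x+a'$ (respectively $s'=X+a'$) using the inequalities $c>\max\{a,d\}$, $g>\max\{c,d\}$, and so on. Until you exhibit those inequalities window by window, ``eliminated'' is an assertion, not an argument; in particular, for $S_4\to S_4$ your plan of comparing common-sum pairs only disposes of two of the six windows --- in the other four the paper needs the separate observation that one of the \emph{unplayable} stacks $x_2,x_3$ already exceeds the would-be peak height $X$, which is not a consequence of the common-sum remark.

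There is also a concrete error in your $S_2\to S_1$ case. You rule out $\p'\in S_1$ by noting that adjacent zeros \emph{among the unchanged triple} would force $a=0$; but the adjacent zero pair of $\p'$ can perfectly well be created inside the played window, e.g.\ $(2,2,2,2,2,2,2)\to(2,0,0,2,2,2,2)$, so this subcase is simply not covered by your parenthetical. The paper's argument is the right one: any tub $x00x$ built from $(a,\dots,a)$ leaves at least two of the three remaining stacks at height $a$, so the tri-sum is at least $2a>a\ge x$, violating the $S_1$ sum condition. The same lesson applies throughout: persistence of zeros and of the unchanged triple narrows the possibilities, but it is the sum computations that actually close each case.
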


 \begin{proof}
 To prove condition (I) of Theorem~\ref{thm:howtoprove} we will use the equivalent statement that there is no move from a $\P$-position to another $\P$-position. For each of the four subsets of $S$, we consider moves to all the other sets. 
 
{ \bf Moves from $S_1$:} We start with $\p = (0,c,d,e,f,c,0)\in S_1$, with $d+e+f=c$.  Note that we cannot move to $\p' \in S_1 \cup S_2$ because in either case, we would have to play on the five stacks $c d e \!f \!c$ to simultaneously reduce the $c$ stacks and the sum to a new value $c' < c$ in the case of $S_1$ and $c'=0$ in the case of $S_2$. A move to $S_3$ is not possible since the minimum in $S_3$ is bigger than zero.   A move to  $\p' \in S_4$ is not possible since $S_4$ does not have adjacent minima by  Remark~\ref{rem:props}(\ref{rem:S4}).  Thus, no move is possible from $S_1$ to $S$.

 { \bf Moves from $S_2$:} Now assume that $\p=(a,a,a,a,a,a,a) \in S_2$ with $a>0$  because $\p$ is the terminal position for $a=0$.  To move to $S_1$, we have to create a tub configuration of the form $x00x$, which requires play on at least three stacks. We can at most reduce one of the three remaining stacks  $x_1x_2x_3=aaa$, so the sum $x_1+x_2+x_3\ge 2a$, while $x=a$, so there is no move from $S_2$ to $S_1$. Clearly, one cannot move from $S_2$ to $S_2$. By Remark~\ref{rem:props}(\ref{rem:comsum}) there is no move from $S_2$ to $S_3\cup S_4$.

{ \bf Moves from $S_3$:} Let $\p=(a,a,c,d,e,d,c) \in S_3$.  To move to $S_1 \cup S_3$, we have to create a tub configuration of the form $xa'a'x$, with $a'=0$ for $\p \in S_1$ and $a'\le a$ for $\p \in S_3$.  First we consider play when the minima $a'$ of $\p'$ are located at the $a$ stacks. For a move to $S_1$, we play on both $a$ stacks making them zero, and then either  reduce both $c$ stacks or one of the $d$ stacks, but not both. In either case, we have that $x \le c$  and the tri-sum  $d'+e+d \ge d+e=a+c> c$, so the tri-sum condition is not satisfied. For a move to $S_3$, the overlap stack remains at the same location, and by Remark~\ref{rem:props}(\ref{rem:comsum}), there is no move to $S_3$. 

Now we look at the cases where we create a tub configuration $xa'a'x$ elsewhere.  In each case, we use play on three stacks as described in Remark~\ref{rem:props}(\ref{rem:tubcreate}). By symmetry of positions in $S_3$ we have to consider the three possibilities indicated in Figure~\ref{fig:S3-moves}.  They are $x=a$ with $\x13=edc$, $x=a$ with $\x13=dca$, or $x=d$ with $\x13=aac$ (since $c>d$ by Remark~\ref{rem:props}(\ref{rem:props1}), so we read counter-clockwise). By Remark~\ref{rem:props}(\ref{rem:tubS1S3}), we need to satisfy the conditions $x_1+x_2+x_3 = x+0 = x+a'$ for $\p \in S_1$ and both $x_1=x_3$ and $x_2+x_3 = x+a'$ for $\p \in S_3$. We will show that even if we reduce $x_1$ to zero, we will not be able to satisfy the respective sum conditions. 
When $x = a$, then $x_2+x_3 \ge \min\{d+c,c+a\}>a+a'=x+a'$, and for $x=d$, $a+c=d+e>d+a'=x+a'$. Thus, $\p' \notin S_1\cup S_3$. It is also not possible to move to  $\p' \in S_2$, since by Remark~\ref{rem:props}(\ref{rem:props1}), $\min\{c,e\}>a$, so we would need to play on five stacks to reduce   $cdedc$ to $aaaaa$. 

\begin{figure}[ht]
     \centering
     \begin{subfigure}[b]{0.35\textwidth}
         \centering
         \includegraphics[width=0.8\textwidth]{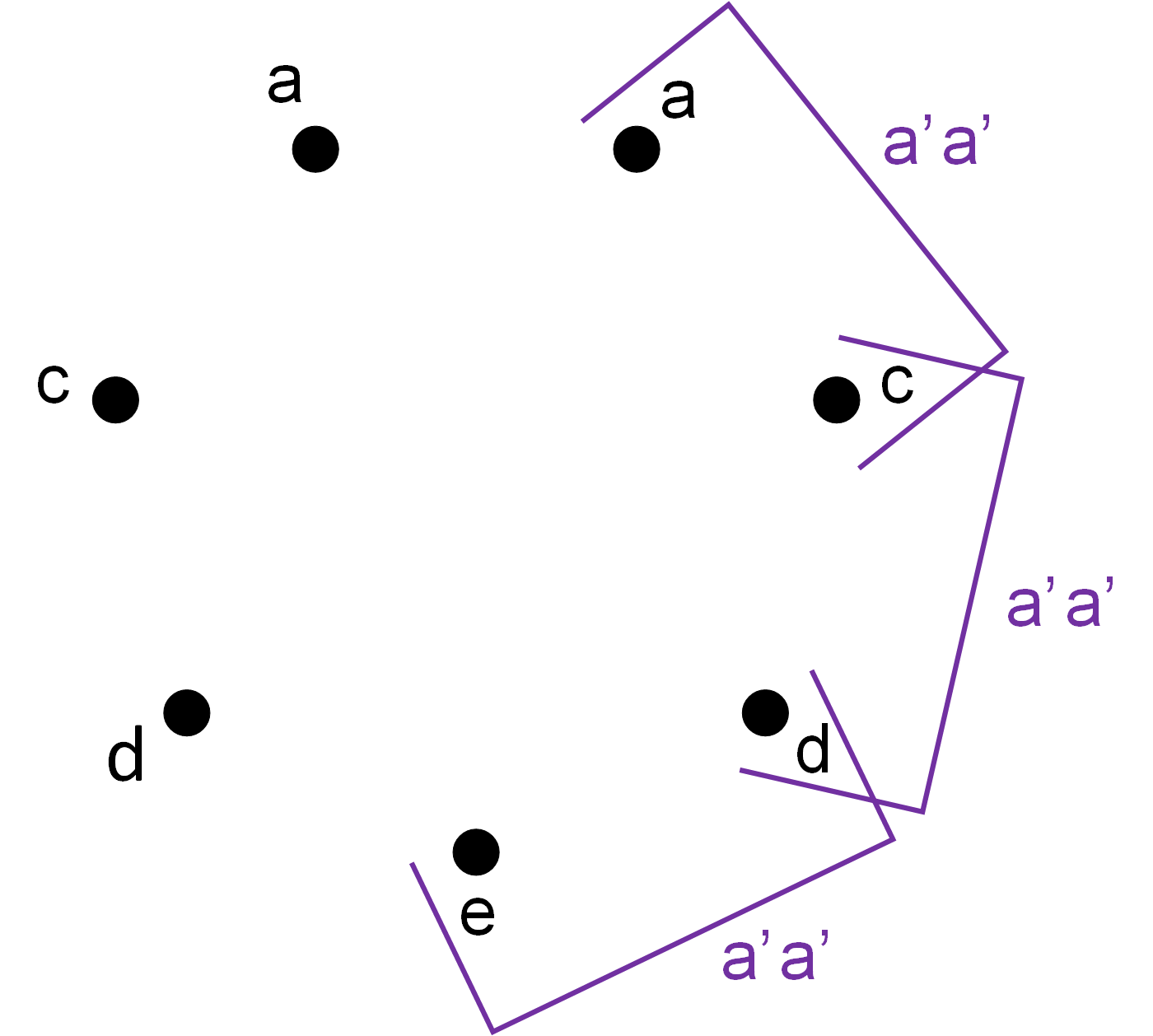}
         \caption{}
         \label{fig:S3-moves}
     \end{subfigure}
     \hspace{0.05\textwidth}
     \begin{subfigure}[b]{0.35 \textwidth}
         \centering
         \includegraphics[width=0.8\textwidth]{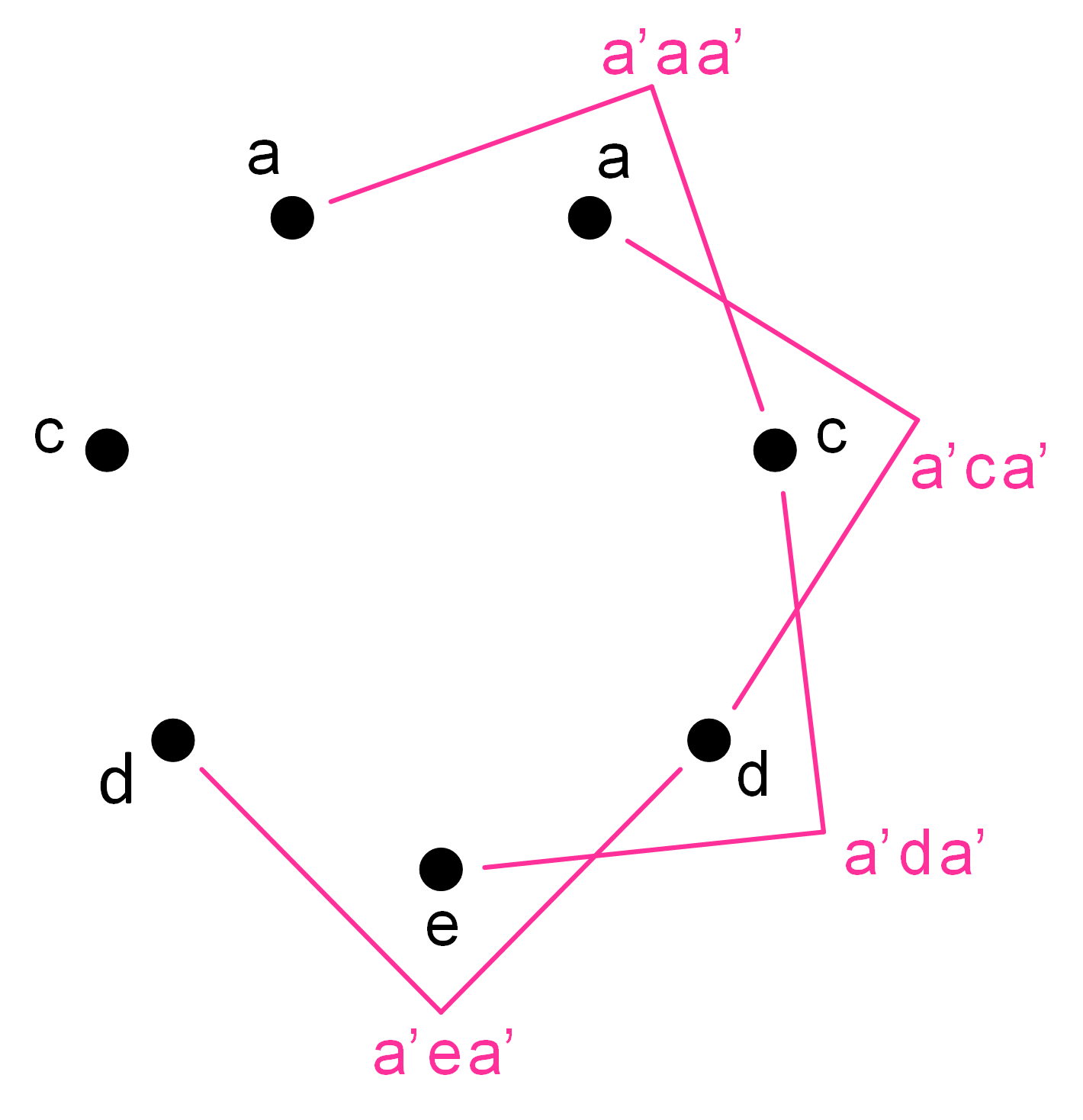}
         \caption{}
         \label{fig:S4-moves}
     \end{subfigure}
     
        \caption{Visualization of moves from $S_3$ to (a)  $S_1\cup S_3$ (b) $S_4$.}
        \label{fig:moves}
\end{figure}
 
To show that we cannot move from $S_3$ to $S_4$, we consider the possible  locations of the minmax peak of $\p'$. Due to symmetry of positions in $S_3$, the four peak configurations, shown in Figure~\ref{fig:S4-moves},  are: $a'aa'$ with sums $d+e \leq d+c$,  $a'ca'$ with sums $e+d=c+a$, $a'da'$ with sums $d+c>a+a$, or $a'ea'$ with sums $c+a$ (in both cases). Note that in the first three cases, we have $a'<a$ because the minimum of the minmax peak in $S_4$ has to be strictly less than the adjacent stacks, and in each of these cases, the $a$ stack is one of them. We can play on one more stack adjacent to the $a'$ stacks and we play on the stack that affects the larger sum.  In the first two cases, the peak sum is smaller than the smaller of the two sums, and since we can adjust only one sum, we cannot legally move to $\p' \in S_4$. For the third case, equality with the peak sum requires that  $d'+c = d+a'$ and hence $d' = d-c+a'<a'$ because $c>d$ by Remark~\ref{rem:props}(\ref{rem:props1}). For the last case,  the overlap stack is at the same location in $\p$ and $\p'$, so by Remark~\ref{rem:props}(\ref{rem:comsum}), we cannot adjust all four sums with play on only four stacks.   This shows that we cannot move to $\p' \in S_4$.

 {\bf Moves from $S_4$}: Last but not least, we check whether we can move from $\p=(a,b,c,d,e,a,g) \in S_4$ to $\p' \in S$.  The approach is similar to that when $\p\in S_3$. For a  move  to $\p' \in S_1 \cup S_3$, we once more need to create a tub configuration $xa'a'x$, where $a'\leq a$, and $a'=0$ for moves to $S_1$.  Due to the semi-symmetric nature of positions in $S_4$, we now need to consider all seven placements of the new pair of minima. We start by putting them at stacks $a$ and $b$ and get the following cases:  $x=c, x_1x_2x_3=aed$ (since we have to reduce $g$), $x=a, x_1x_2x_3=eag$, $x=\min\{b,e\}, x_1x_2x_3=aga$ (no matter which side we need to play on),  $x=a, x_1x_2x_3=bag$ (since we need to play on $c$), $x=d, x_1x_2x_3=abc$, $x=a, x_1x_2x_3=dcb$, and $x=a, x_1x_2x_3=cde$. 
 
 First we look at the cases where $x=a$. Reducing $x_1$ to zero, we have that  $x_2+x_3= a+g=c+b=e+d>a+a\ge a+a'$, so the sum conditions of $S_1$ and $S_3$ are not satisfied. Likewise,  for $x=c$, we have that $x_2+x_3=e+d=c+b>c+a \ge c+a'$, and for $x=d$, we obtain $x_2+x_3=b+c=d+e>d+a\ge d+a'$.  Finally, for $x=\min\{b,e\}$, we have that  $x_2+x_3=g+a=\min\{b,e\}+\max\{d,c\}>\min\{b,e\}+a\ge\min\{b,e\} +a'$, so we cannot move to $\p' \in S_1\cup S_3$.

 Next we look at moves from $S_4$ to $S_2$.  Since $a< \min\{b,e,g\}$, we have to reduce at least those three stacks to $a$ which requires play on   five stacks. Therefore we cannot move from $S_4$ to $S_2$.

 Finally, we look at moves from $S_4$ to $S_4$. If we keep the location of the minima and hence the overlap stack, then by Remark~\ref{rem:props}(\ref{rem:comsum}) there is no move to $\p' \in S_4$. Thus we need to consider whether we can create a minmax peak $a'Xa'$ with $a'<a$ and remaining stacks $\xx14$ which satisfy $x_1+x_2=x_3+x_4=a'+X$ by Remark~\ref{rem:props}(\ref{rem:peak}). We can play on either $x_1$ or $x_4$, but in either case we can only modify one of the two sums $x_1+x_2$ and $x_3+x_4$. The common sum for $\p$ is $s=g+a$, while the  for $\p'$ it is $s'=X+a'<s$. Furthermore, $x_2$ and $x_3$ cannot be adjusted. Let's look at the possible cases, going clockwise and starting with new minimia at the $g$ and $b$ stacks, for a total of six cases: (1) $X \le a$ and $\xx14=cdea$; (2) $X \le b$ and $\xx14=deag$; (3) $X \le c$ and $\xx14=eaga$; (4) $X \le d$ and $\xx14=agab$; (5) $X \le e$ and $\xx14=gabc$; and (6) $X \le a$ and $\xx14=abcd$.
 In cases (1) and (3), $x_3>X$, while in cases (4) and (6), $x_2>X$, either directly from the definition of positions in $S_4$ or by Remark~\ref{rem:props}(\ref{rem:S4vals}). For the remaining two cases, (2) and (5), we have that $x_1+x_2=x_3+x_4=g+a=s>s'$ and we can adjust only one of the two sums. This shows that there is no move from $S_4$ to $S_4$. 
 
 This completes the proof that there is no move from $S$ to $S$.
 \end{proof}

\subsection{There always is a move from \texorpdfstring{$\p \in S^c$ to $\p' \in S$}{\bf{p} not in S to \bf{p}' in S }}\label{sub:NtoP}

We now show the second part of Theorem~\ref{thm:howtoprove}.

\begin{proposition} 
If $\p \in S^c$, then there is a  move  to  $\p' \in S$.
\label{prop:wintolose}
\end{proposition}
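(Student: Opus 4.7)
The plan is to exhibit, for each $\p \in S^c$, an explicit move to some target $\p' \in S$. A natural organization is by the value of $a = \min(\p)$ and the geometric arrangement of the stacks around the circle, since each $S_i$ has a characteristic shape by \rref{rem:props}: $S_1$ and $S_3$ each require an adjacent pair of minima sitting inside a tub configuration, $S_4$ requires a minmax peak together with the common-sum condition, and $S_2$ consists of the constant positions. For each candidate target I must verify that the required coordinate changes all occur on four consecutive stacks, that every change is a decrease, and that the numerical conditions defining the relevant $S_i$ are met.

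I would first handle the case $a = 0$, aiming at a target in $S_1$ or the terminal $\O \in S_2$. If $\p$ already contains two adjacent zeros, the plan is to fix them and use the complementary four-stack window to force the flanking equality $c = g$ and the tri-sum $d + e + f = c$ on the remaining stacks via monotone reductions, exploiting that three of those stacks lie in one arc of four. If $\p$ has zeros but none adjacent, or only a single zero, I would apply the construction in \rref{rem:props}(\ref{rem:tubcreate}) to manufacture an adjacent zero pair plus a matching flank, leaving a single free coordinate to hit the tri-sum condition. When all non-zero coordinates happen to lie in a single arc of four, a direct one-move reduction to $\O$ serves as a fallback.

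For $a > 0$ I would proceed through a cascade of attempts. The first is a move to $S_2$, reducing all stacks to a common $a' < a$; this is feasible precisely when the stacks outside some four-arc are already equal, so it dispatches the ``nearly constant'' positions. The second is a move to $S_3$, available when $\p$ has an adjacent pair of stacks of height $a$ (or one can be produced by the tub-creation construction): fix the minima and use the four-stack window to install the symmetry $c = g$, $d = f$ together with the overlap sum $a + c = d + e$. The third is a move to $S_4$: identify a candidate minmax peak $a'Xa'$ with $a' \le a$ inside the four-stack window and, using that the common sum $g + a'$ can be established through the two window-reachable stacks adjacent to the peak, enforce $b + c = d + e = g + a'$. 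A further fallback of moving to $S_1$ by driving two non-adjacent stacks down to zero (while simultaneously adjusting two more) may be needed for a handful of boundary positions.

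The main obstacle is completeness: since each $S_i$ imposes several equality and strict-inequality conditions while a single move adjusts only four contiguous coordinates, the available flexibility is tight, and for positions $\p$ lying just outside $S$ the obvious target is often blocked either by a violated strict inequality from the definition of $S_3$ or $S_4$ or by a required change at a stack outside every feasible four-window. I would handle these boundary cases by a systematic case split on the sorted multiset of stack heights, on the multiplicity and placement of the minimum, and on the length of the longest arc of consecutive minima; in each case I would select the $S_i$ whose conditions are consistent with the induced structure and verify the construction explicitly. The bookkeeping needed to guarantee that no $\p \in S^c$ escapes every case is the heart of the argument and will likely be organized into separate lemmas, one per target family.
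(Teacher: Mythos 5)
Your proposal is an outline of a strategy rather than a proof: the entire difficulty of this proposition lies in the exhaustive case analysis, and you explicitly defer it (``the bookkeeping needed to guarantee that no $\p \in S^c$ escapes every case is the heart of the argument and will likely be organized into separate lemmas''). Nothing in the write-up verifies that the proposed cascade of targets covers every position, and the paper's actual proof shows how much work that requires: it splits $S^c$ into positions with at least two zeros, exactly one zero, and no zero, and for the zero-free case further splits on whether the maxima are antipodal, each branch requiring a carefully chosen target sum such as $s=\min\{M+x_1,\,x_2+x_3,\,y_2+y_3\}$ and a verification that the move is legal and that the strict inequalities of $S_3$ or $S_4$ survive. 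None of that content is present in your proposal, so it cannot be accepted as a proof.

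Beyond the missing case analysis, two concrete claims in your outline are already wrong or unsupported. First, a move to $S_2$ ``reducing all stacks to a common $a' < a$'' is impossible: the three unplayed stacks cannot be decreased, and each has height at least $a=\min(\p)$, so a non-terminal move to $S_2$ exists only when three \emph{consecutive} stacks already equal $\min(\p)$ (and, as the paper notes, one is never forced to use it). Second, for positions containing a zero you target only $S_1$ and the terminal position, but the paper's Multiple Zeros and Unique Zero Lemmas show that several such configurations --- two zeros separated by one stack, and most of subcase (d) of the unique-zero analysis --- are resolved by moves to $S_4$; you give no argument that an $S_1$ target is always reachable in those situations, and the conditions of $S_1$ (an adjacent zero pair inside a tub plus the tri-sum, all achievable within one window of four consecutive stacks) make that far from obvious. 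You would also need tools playing the role of the paper's Valley Lemma and $\CN{3}{2}$-equivalence, or substitutes for them, to make the constructions in your $a=0$ case precise.
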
 

To show that we can make a legal move from any position $\p \in S^c$ to a position $\p' \in S$, we partition the set $S^c$ according to the number of zeros of $\p$ and, for positions without a zero stack, according to the number of maximal stacks and their location. Note that if $\p$ contains an empty stack, then we cannot move to $S_3$. Also, except for a move to the terminal position, we never are forced to move to $S_2$, even though the easiest move from a position that contains three consecutive minima is to $S_2$ (by making the other four stacks equal to that minimum height). We will only need to distinguish between the case of exactly one zero and the case of at least two zeros. Note that in~\cite{Vo2018}, $S^c$ was partitioned according to the exact number of minima of $\p$. The proof presented here is shorter and uses some of the ideas from~\cite{Vo2018}, such as  Definition~\ref{def:valley} and Lemma~\ref{lem:valley}. We call out these structures and $\CN{3}{2}$-equivalence (defined below) because they give insight into stack configurations from which it is easy to move to $\P$-positions. 

\begin{definition}%[Valley Positions]
\label{def:valley}
A position $\p$ is called  \emph{deep-valley} if and only if five consecutive stacks $p_1p_2p_3p_4p_5$ satisfy $p_2+p_3+p_4 \leq \min\{p_1,p_5\}$. It is called \emph{shallow-valley} if and only if $p_1 \leq p_5$ and $p_2+p_3 \leq p_1 < p_2+p_3+p_4$.
\end{definition}
\vspace{0.1in}

\begin{lemma} [Valley Lemma] \label{lem:valley}
If $\p = (p_1,p_2,p_3,p_4,p_5,p_6,p_7)$ is \emph{deep-valley} and $s = p_2+p_3+p_4$, then there is a move to $\p' = (s,p_2,p_3,p_4,s,0,0)\in S_1$. On the other hand, if $\p$ is \emph{shallow-valley}, then there is a move to $\p'  = (p_1,p_2,p_3,p_1-(p_2+p_3),p_1,0,0)\in S_1$.
\end{lemma}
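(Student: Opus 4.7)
The plan is to verify each of the two claims by direct construction: identify which four consecutive stacks get modified, check that every modified entry is non-increasing (with at least one strict decrease, so that a legal move actually takes place), and then confirm that the resulting $\p'$ satisfies the defining conditions of $S_1$ after a rotation that places the two zero stacks in the $a,b$ positions.

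For the deep-valley case, the target $\p' = (s, p_2, p_3, p_4, s, 0, 0)$ leaves $p_2, p_3, p_4$ unchanged, so the move is on the four consecutive stacks in positions $5, 6, 7, 1$. Legality reduces to the two inequalities $s \le p_1$ and $s \le p_5$, which are exactly the content of $s \le \min\{p_1, p_5\}$ from the deep-valley hypothesis (the reductions at positions $6$ and $7$ to zero are trivially legal). Rotating $\p'$ to start at position $6$ gives $(0, 0, s, p_2, p_3, p_4, s)$, so $a=b=0$, $c = g = s$, and $d + e + f = p_2 + p_3 + p_4 = s$, matching the definition of $S_1$ (assuming $s > 0$; if $s = 0$ the lemma is vacuous since $\p = \p' = \O$).

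For the shallow-valley case, the target $\p' = (p_1, p_2, p_3, p_1-(p_2+p_3), p_1, 0, 0)$ modifies exactly positions $4, 5, 6, 7$, again four consecutive stacks. Three conditions need checking: $p_1 - (p_2+p_3) \ge 0$ follows from $p_2 + p_3 \le p_1$; the crucial inequality $p_1 - (p_2+p_3) \le p_4$ rearranges to $p_1 \le p_2 + p_3 + p_4$, which holds by the strict inequality in the shallow-valley hypothesis; and $p_1 \le p_5$ is given. The same rotation shows $\p' \in S_1$ whenever $p_1 > 0$.

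The main obstacle, such as it is, is recognizing the role of strictness in $p_1 < p_2+p_3+p_4$: this is exactly what guarantees both that $p_4 \to p_1 - (p_2+p_3)$ is a non-increasing change and that this change is strict, so that $\p \to \p'$ qualifies as a legal move (at least one token is actually removed, even in the degenerate case where $p_1 = p_5$ and $p_6 = p_7 = 0$). Everything else is routine bookkeeping once the target positions are written down.
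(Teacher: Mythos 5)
Your proof is correct and follows essentially the same route as the paper's: verify that each target position is reached by legal, non-increasing play on four consecutive stacks and then check the $S_1$ conditions; you are in fact more careful than the paper about the strictness of the move and the need for $s>0$. One small slip in your side remark: when $s=0$ in the deep-valley case, $\p$ need not equal $\O$ (only $p_2=p_3=p_4=0$ is forced), though excluding that degenerate case is still the right call since then $\p'=\O\notin S_1$ (and $s>0$ holds wherever the lemma is invoked).
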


\begin{proof} 
If $\p$ is deep-valley, then $p_1'=p_5' =p_2+p_3+p_4 \leq \min\{p_1,p_5\}$, so it follows that $\p \rightarrow \p' \in S_1$  is a legal move. If $\p$ is shallow-valley, then  $p_1'=p_5'=p_1 \leq p_5$,  $p_1-(p_2+p_3) \geq 0$, and $p_4 \ge p_4'=p_1-(p_2+p_3)$. Also, $p_1-(p_2+p_3)+p_2+p_3 = p_1$, $\p \rightarrow \p' \in S_1$  is a legal move.
\end{proof}

The notion of $\CN{3}{2}$-equivalence comes into  play when $\p$ contains  zero stacks. It builds on the structure of the $\P$-positions of $\CN{3}{2}$, which are those with equal stack heights (see either \cite{mDsH09} or convince yourself easily with a one-line proof). Note that the definition below is not specific to the game $\CN{7}{4}$.  \\

\begin{definition}\label{def:cn32}
A position $\p$ of a $\CN{n}{k}$ game is \emph{$\CN{3}{2}$-equivalent} if the  stacks of $\p$ can be partitioned into subsets  $A_1$, $A_2$, and $A_3$ together with a set (or sets) of consecutive zero stacks, where $A_1$, $A_2$, and $A_3$ satisfy the following conditions:
\begin{enumerate}
    \item $A_i \cap A_j=\varnothing$ for $i \ne j$;
    \item Any pair of the three sets $A_1$, $A_2$, and $A_3$ and any zero stacks that are between them are contained in $k$ consecutive stacks;
    \item Any move that involves at least one stack from each of the three sets $A_1$, $A_2$, and $A_3$ requires play on at least $k+1$ consecutive stacks, thus is not allowed.
\end{enumerate}

We define the {\em set sums } $\tilde{p}_i=\sum_{p_j\in A_i}p_j $ and call a move a  \emph{$\CN{3}{2}$ winning move} if play on the stacks in the sets $A_i$ results in equal set sums in $\p'$. A $\CN{3}{2}$-equivalent position that has equal set sums is called a $\CN{3}{2}$-equivalent \P-position.
\end{definition}
\vspace{0.1in}

$\CN{3}{2}$-equivalent positions are custom-made for moves to $S_1$ since the conditions on the non-zero stacks require equality of the tri-sum and the two adjacent stack heights (set sum of a single stack). But we will also see that a   $\CN{3}{2}$ winning move can be used when there are additional inequality conditions on some of the stacks as long as those conditions can be maintained. In other instances, the  sum conditions may involve a stack outside the three sets, but the sum condition can be achieved without play on that ``outside'' stack.

\vspace{0.1in}
The proof of Proposition~\ref{prop:wintolose} will proceed as a sequence of lemmas where we will consider the individual cases according to the number of zeros and number and location(s) of the  maximum values in the case when the position does not have a zero. We start by dealing with positions that have at least two zero stacks.

\begin{lemma} [Multiple Zeros Lemma] If $\p \in S^c$ and $\p$ has at least two stacks without tokens, then there is a move to $\p' \in S_1 \cup S_2 \cup S_4$. 
\label{lem:zeros}
\end{lemma}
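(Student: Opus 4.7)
The plan is to case-split on the zero configuration of $\p$ and, in each case, exhibit an explicit move into $S_1 \cup S_2 \cup S_4$. The Valley Lemma (\lref{lem:valley}) handles the bulk of positions with a pair of adjacent zeros, positions with three consecutive zeros admit an immediate move to the terminal position $\O \in S_2$, and the positions with only isolated zeros are handled by exploiting their $\CN{3}{2}$-equivalence structure.

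If $\p$ contains three consecutive zeros, then the at most four remaining non-zero stacks lie in a single window of four consecutive positions, and a single move reducing them all to zero reaches $\O \in S_2$; this also covers every $\p$ with at least five zero-stacks by a pigeonhole argument on the $7$-cycle. If $\p$ contains a pair of adjacent zeros but no three consecutive, rotate so that $\p = (0, 0, x_3, x_4, x_5, x_6, x_7)$ with $x_3, x_7 > 0$, and target $\p' = (0, 0, c, d, e, f, c) \in S_1$ with $d + e + f = c$. Two sub-cases suffice. If $x_4 + x_5 + x_6 \le \min\{x_3, x_7\}$, set $c = x_4 + x_5 + x_6$, leave the middle stacks unchanged, and reduce both $x_3$ and $x_7$ to $c$ by playing on the window $\{7, 1, 2, 3\}$; this is the deep-valley case of the Valley Lemma. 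Otherwise $x_4 + x_5 + x_6 > \min\{x_3, x_7\}$; assume without loss of generality $x_3 \le x_7$, set $c = x_3$, leave $x_3$ unchanged, reduce $x_7$ to $c$, and choose $d \le x_4$, $e \le x_5$, $f \le x_6$ with $d + e + f = c$ (feasible since $x_4 + x_5 + x_6 > c$), playing on the window $\{4, 5, 6, 7\}$. This framework also subsumes the four-zero configurations $(2, 2)$ and $(2, 1, 1)$, in which some of the middle stacks are already zero.

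Finally, suppose $\p$ has only isolated zeros. On the $7$-cycle the configurations up to rotation and reflection are: two zeros with one non-zero between on the short arc (matching the overlap of the two-minima form of $S_4$); two zeros with two non-zeros between on the short arc; and three zeros at the pattern $\{1, 3, 5\}$ (matching the three-minima form of $S_4$). All three configurations are $\CN{3}{2}$-equivalent (\dref{def:cn32}), and the sought target is in each case a $\CN{3}{2}$-equivalent $\P$-position: for the first configuration, a position in $S_4$ with common sum equal to $x_2$ or to one of the two non-overlap pair sums, reached by playing on one of the windows $\{4,5,6,7\}$, $\{2,3,4,5\}$, or $\{6,7,1,2\}$ depending on which magnitude comparison is favorable; for the third configuration, a three-minima $S_4$ position of the form $(0, M, 0, M, 0, d, e)$ with $d+e = M$ and $M = \min\{x_2, x_4\}$, with the degenerate case $M = 1$ instead producing an $S_1$ target containing an adjacent zero pair among the middle stacks. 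In the second configuration no $S_4$-structure fits, so I create a new adjacent zero pair by reducing one of the non-zero stacks adjacent to a zero, then adjust the remaining stacks within the same four-stack window to form an $S_1$ target. The main obstacle is this third step: the restriction to four consecutive playable stacks collides with the rigid common-sum conditions of $S_4$ and the bracket/middle conditions of $S_1$, so the target must be chosen as a function of the magnitudes of the non-zero stacks, and each configuration subdivides further into sub-cases requiring individual verification.
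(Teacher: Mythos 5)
Your case decomposition by zero configuration is sound and your treatment of the easy cases (three consecutive zeros to $\O\in S_2$; an adjacent zero pair to $S_1$ via the deep/shallow valley dichotomy) matches the paper in spirit. But there is a genuine gap in the hardest case, the two isolated zeros separated by two stacks, say $\p=(0,x_1,x_2,0,y_2,y,y_1)$ with all of $x_1,x_2,y_1,y_2>0$. Here you only describe a strategy (zero out a stack adjacent to an existing zero, then repair the $S_1$ sum conditions inside one four-stack window) and explicitly defer the verification to ``sub-cases requiring individual verification.'' That verification is the substance of the lemma: the paper needs nested case splits on $y_1\ge x_1$ versus $y_1<x_1$, on which of $x_1$, $y_1+y$, $y_2$ realizes the relevant minimum, and finally on the location of $\max(\p)$ among $x_1,x_2,y$, before all positions are covered. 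Moreover, your assertion that ``no $S_4$-structure fits'' in this configuration is false: when $y_1\ge x_1$ the paper's winning move is to $(0,x_1,x_2',0,s,0,s)\in S_4$ with $s=\min\{x_1+x_2,y_1\}$, which creates a \emph{third} zero so that the two original zeros and the new one become the three minima of an $S_4$ position. Without either this $S_4$ move or a completed $S_1$ case analysis, the lemma is not proved for this configuration.

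Two smaller defects. In the three-isolated-zeros pattern $(0,x_2,0,x_4,0,x_6,x_7)$ your target sum $M=\min\{x_2,x_4\}$ is wrong: if $x_6+x_7<\min\{x_2,x_4\}$ the required adjustment $d+e=M$ would add tokens. The correct choice is $M=\min\{x_2,x_4,x_6+x_7\}$, which is exactly what falls out of treating this pattern as a special case of the ``one stack apart'' configuration via its $\CN{3}{2}$-equivalence with sets $\{x_2\}$, $\{x_4,0\}$, $\{x_6,x_7\}$ (so the separate case is unnecessary). Finally, in the ``one stack apart'' configuration you claim the target lies in $S_4$, but the condition $a<\max\{c,d\}$ of $S_4$ can fail when the winning move forces both middle stacks of the target to zero; in that degenerate case the resulting position is in $S_1$ (the paper notes this explicitly), so the conclusion survives but your justification as written does not.
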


\begin{proof} Note that we will label the individual stacks as $x$, $x_i$, $y$, and $y_j$ depending on the symmetry of the position as well as the role the different stacks play. Typically, stacks labeled $x$ or $x_i$ are between zeros (short distance) or adjacent to zeros. Since the positions in $S_1\cup S_3 \cup S_4$ all have sum conditions that need to be satisfied, we will typically use $s$ to denote this target sum. We consider the case of two adjacent zeros, two zeros separated by one stack and finally two zeros separated by two (or three) stacks. Figure~\ref{fig:multizero} shows the generic positions in each of the cases.

\begin{figure}[ht]
     \centering
     \begin{subfigure}[b]{0.3\textwidth}
         \centering
         \includegraphics[width=0.8\textwidth]{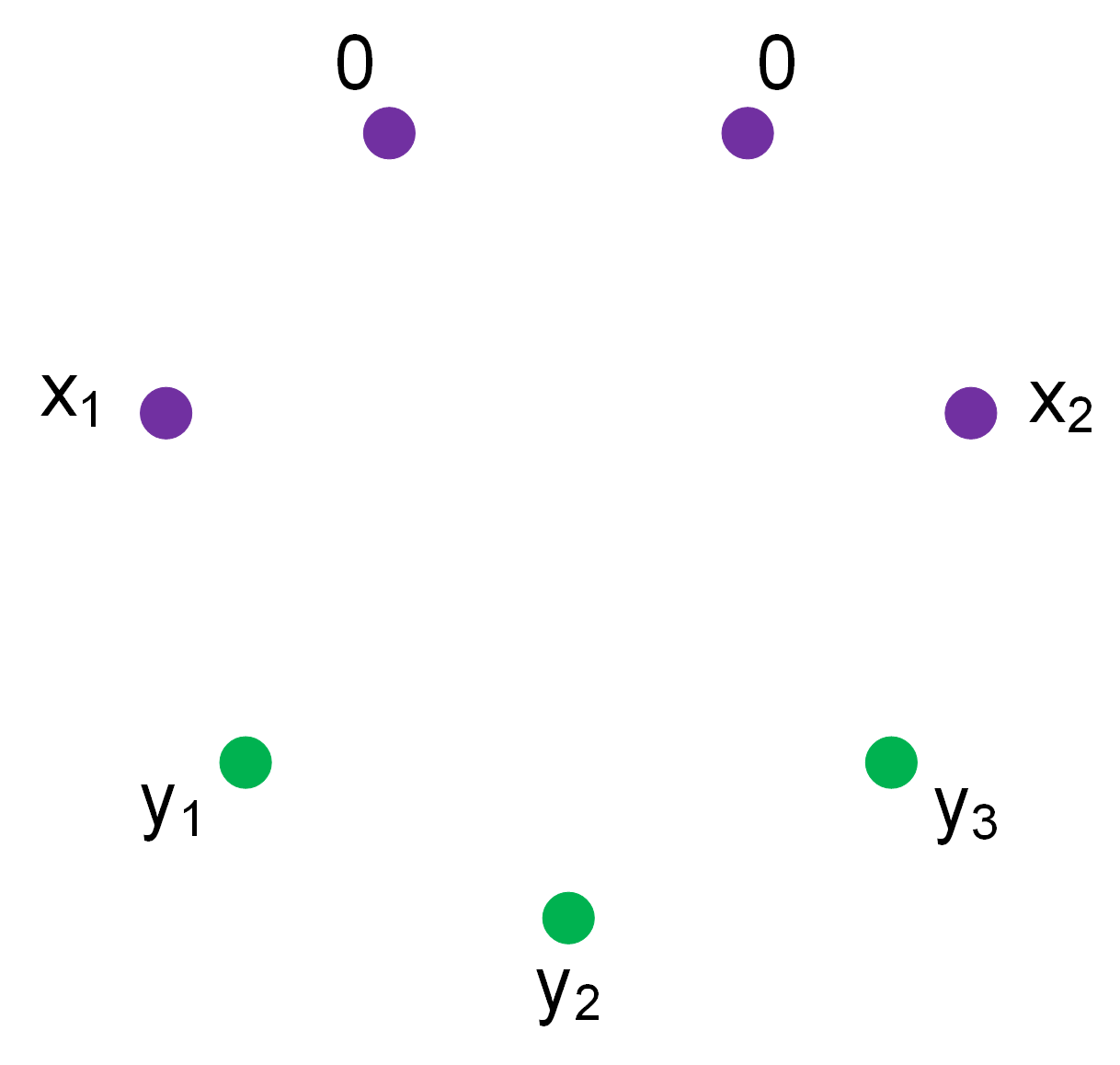}
         \caption{}
         \label{fig:2cons}
     \end{subfigure}
     \hfill
     \begin{subfigure}[b]{0.3\textwidth}
         \centering
         \includegraphics[width=0.8\textwidth]{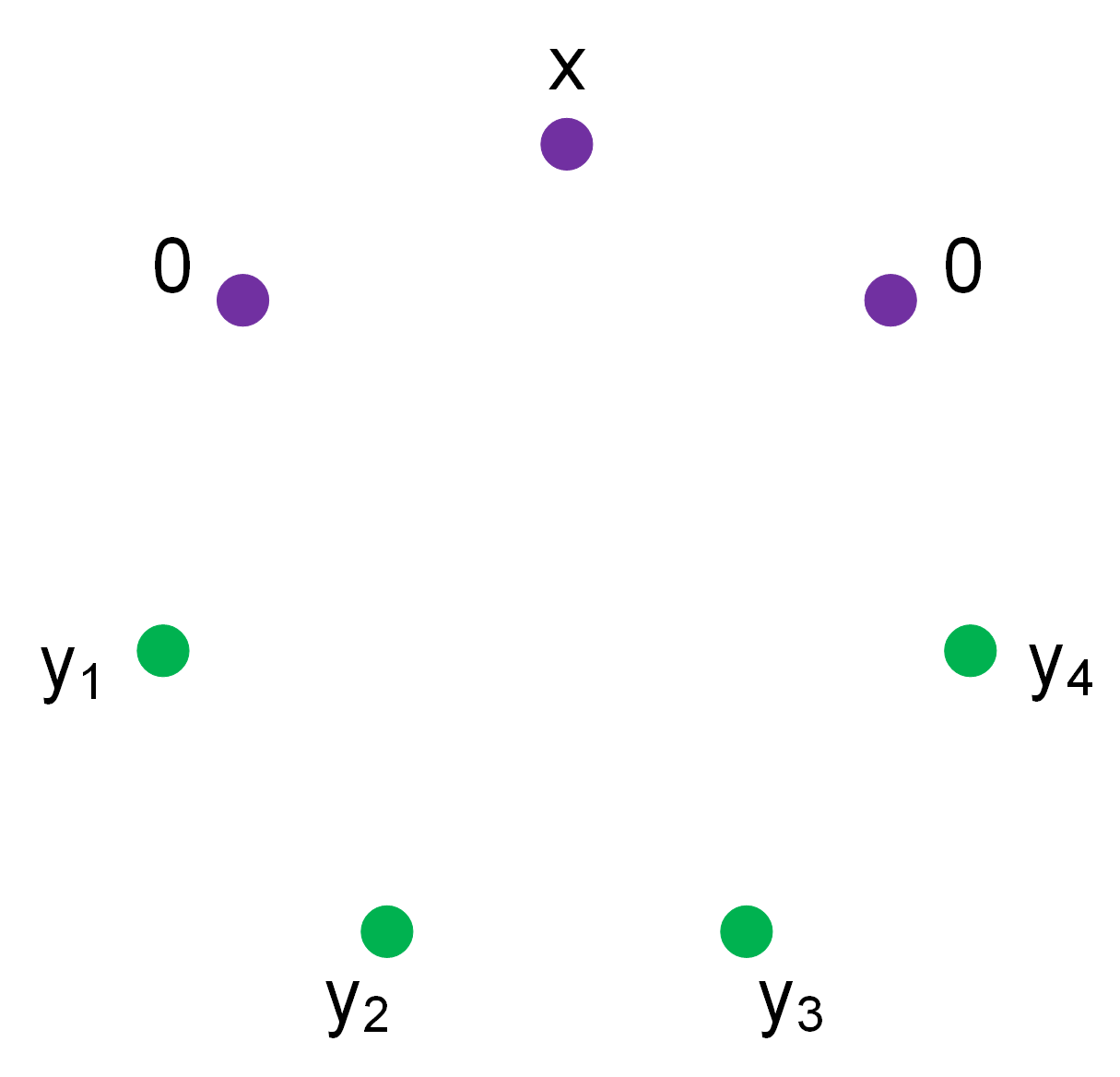}
         \caption{}
         \label{fig:2oneApart}
     \end{subfigure}
       \hfill
     \begin{subfigure}[b]{0.3\textwidth}
         \centering
         \includegraphics[width=0.8\textwidth]{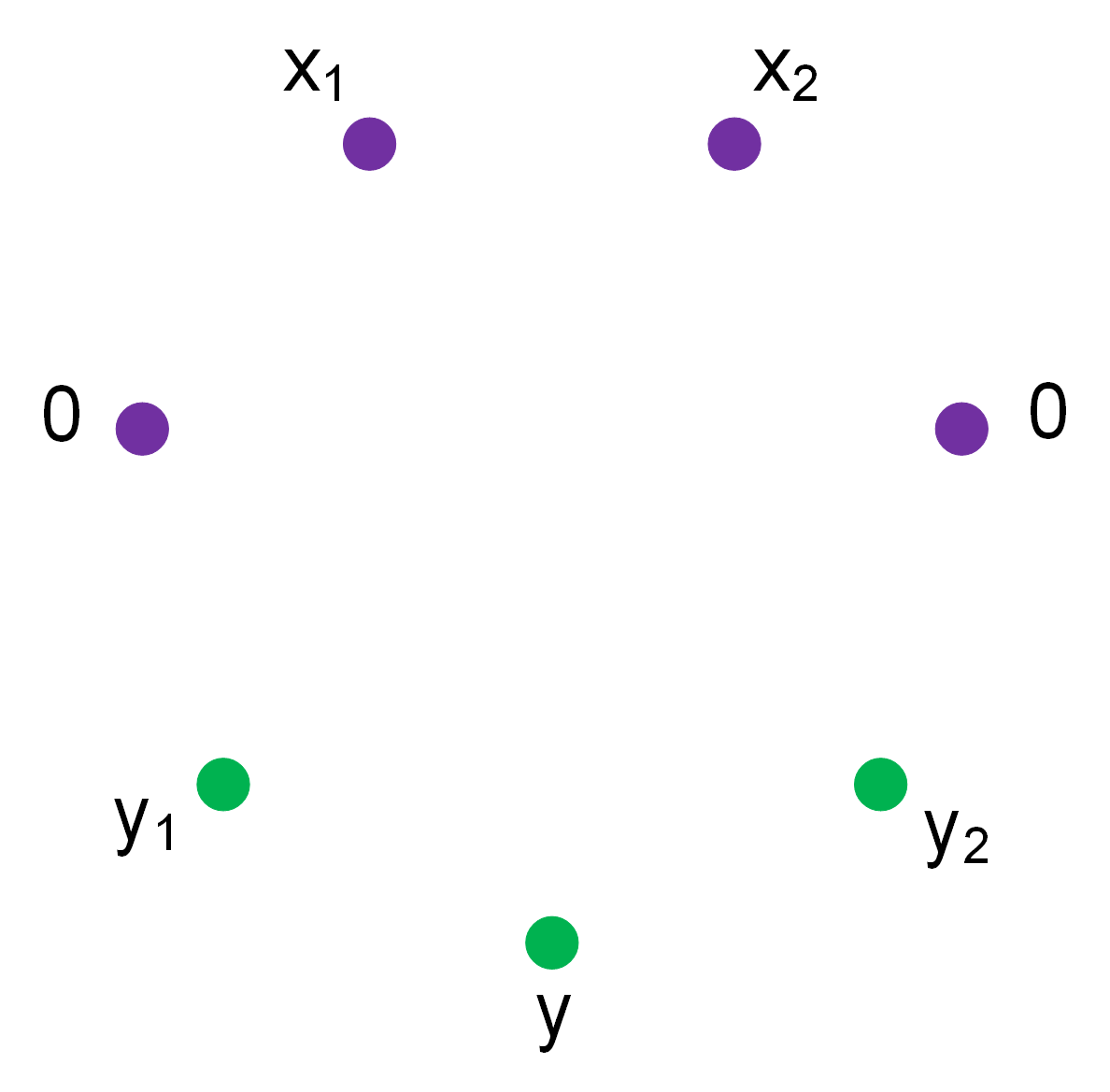}
         \caption{}
         \label{fig:2twoApart}
     \end{subfigure}
     
        \caption{Generic positions with at least two zeros. (a) Two consecutive zeros. (b) Two zeros separated  by one stack. (c) Two zeros separated by two stacks.}
        \label{fig:multizero}
\end{figure}

First, suppose there are two consecutive zeros in the position, then $\p = (x_1,0,0,x_2,y_3, y_2,y_1)$, shown in Figure~\ref{fig:2cons}. Note that  $\p$ is $\CN{3}{2}$-equivalent with sets $A_1=\{x_1\}$, $A_2=\{x_2\}$, and $A_3=\{y_1, y_2, y_3\}$. Thus we can make the $\CN{3}{2}$ winning move to $\p'\in S_1$ by adjusting the stacks in two of the $A_i$ to make the set sums in $\p'$ equal to the minimal set sum in $\p$. This can be achieved with play on four stacks or fewer.

Now we can assume that any zeros in $\p$ are isolated, that is, they are either separated by one stack  or by two stacks in their shortest distance between them. Let's first consider the case of two zeros separated by a single stack, that is, $\p=(0,x,0,y_1,y_2, y_3, y_4)$ with $\min\{x, y_1,y_4\}>0$ because of the isolated zero condition (see Figure~\ref{fig:2oneApart}). Our goal is to move to $S_4$. Due to the zeros, the sum conditions of $S_4$ reduce to $x'=y_1'+y_2' = y_3'+y_4'$, with $\min\{y_1',y_4'\}>0$, so $\p$ is $\CN{3}{2}$-equivalent with sets $A_1=\{x\}$, $A_2=\{y_1,y_2\}$, and $A_3=\{y_3, y_4\}$ and we can make the $\CN{3}{2}$ winning move to $\p'$. Note that we can achieve the condition $\min\{y_1',y_4'\}>0$ because the original stacks were non-zero, and any set of two stacks that is being played on can be adjusted to achieve the desired sum without making $y_1$ or $y_4$ equal to zero since $x>0$ by assumption of the isolated zeros. However, if in the process, we need to make $y_2'=y_3'=0$, then the resulting position is in $S_1$.

Now we turn to the case where the zeros are separated by two stacks, that is, $\p=(0,x_1,x_2,0,y_2,y, y_1)$, with $\min\{x_1,x_2, y_1, y_2\} >0$ since we assume isolated zeros (see Figure~\ref{fig:2twoApart}).  We also assume w.l.o.g. that $y_2 \ge y_1$. Now we need to consider two subcases: $y_1 \ge x_1$ and $y_1 < x_1$. Note that for each of the subcases, the sum $s$ will be defined on a case by case basis. 

In the first case, we let $s= \min\{x_1+x_2,y_1\}$ and move to $\p'=(0,x_1,x_2',0,s,0, s)\in S_4$ with $x_1+x_2'=s$.  While this looks like there is play on five stacks, either $x_2$ or $y_1$ will remain the same. If $s = y_1$, then play is on the $x_2,0,y_2$ and $y$ stacks, and because $x_1 \le y_1$, we have $x_2'=s-x_1=y_1-x_1\ge 0$. If $s = x_1+x_2$, then play is on the three $y$ stacks.

Now we look at $y_1 < x_1$, which is a little bit more involved. Here our goal is to move to $S_1$, so we need to create a pair of zeros. Since $y_2 \ge y_1$, we choose $x_2'=0$ and show that we can make $x_1'$, $y_2'$, and the tri-sum $0+y_1'+y'$ equal in $\p'$. Let $s = \min\{x_1,  y_1+y,y_2\}$. If $s = x_1$, $ s=y_1+y $, or $s = y_2$ with the additional condition that  $y\le y_2$, then we can move to $\p'= (s,0,0,s,y',y_1',0)$ with $y'+y_1'=s$ by playing on at most four stacks. If $s = x_1$, then play is on stacks $x_2$, $y_2$, and $y$, with $y'=s-y_1=x_1-y_1>0$. If $s = y_1+y$, then play is on stacks $x_1$, $x_2$, and  $y_2$. Finally, if $s=y_2 \ge y$, then play is on stacks $x_2$, $x_1$, and $y_1$, with $y_1'=y_2-y \ge 0$. 

This leaves the case of $y_1 <x_1$, $y_1 \le y_2$,   $y_2 <\{x_1, y_1+y\}$ with $y > y_2$ unresolved. This set of inequalities can be simplified to $y_1 \le y_2$, $y_1 <x_1$,  and $y_2 <\{x_1, y\}$. Note specifically that $y > y_i$ for $i = 1,2$. We need to make further distinctions as to where the maximal value occurs. In all cases we will move to $S_1$, but the location of the maximal value determines where the pair of adjacent zeros is created. Let $M=\max(\p)=\max\{x_1,x_2, y\}$ (all other stacks cannot be maximal due to the inequalities).

First we consider the case where the maximal value occurs next to a zero, that is, $M=x_1$ or $M= x_2$. Let  $s=\min\{x_1+y_1, x_2+y_2, y\}$ and assume that $M=x_1$. We claim that there is a legal move to $\p' \in S_1$ where $\p'=(0,s,x_2',0,y_2, s, 0)$ with $x_2'+y_2=s$. Note that $M=x_1$ implies that $s <x_1+y_1$ because $s =x_1+y_1$ leads to a contradiction; since $y_i>0$ due to isolated zeros, we would have $x_1<x_1+y_1=s\le y \le M=x_1$. If $s = x_2+y_2$, then play is on stacks $x_1$, $0$, $y_1$, and $y$ and it is a legal move since $x_1=M \ge y\ge s$. If $s = y$, then play is on stacks $y_1$, $0$,  $x_1$, and $x_2$, with $x_2'=s-y_2=y-y_2>0$. Since $y > y_i$, the same proof, except with subscripts 1 and 2 changing places, applies when  $M=x_2$.

The final case is when $M=y >\max\{x_1,x_2\}$. We first consider $x_1>x_2$ and let $s = \min\{x_1, x_2+y_2\}$.  Then the move is to $\p'=(0,s,x_2,0,y_2', s, 0) \in S_1$ with $x_2+y_2'=s$. If $s=x_1$, then play is on $y_1$, $y$, and $y_2$. The move is legal since $y > x_1$ and $y_2'=x_1-x_2>0$. On the other hand, if $s =  x_2+y_2$, then play is on stacks $y$,  $y_1$, and $x_1$ and $y>x_1>s$.
This completes the case of two zeros that are two stacks apart, and therefore, the case of more than two zeros. 
\end{proof}

We next consider the case of a single isolated zero.

\begin{lemma} [Unique Zero Lemma]
If a position $\p\in S^c$ has a unique zero, then there is a move to $\p' \in S$.
 \end{lemma}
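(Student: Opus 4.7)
The plan is to show that from any $\p \in S^c$ with a unique zero stack, some legal move reaches a $\P$-position. Since the zero is preserved by every move, $\p'$ must also contain a zero; this excludes $\p' \in S_2 \setminus \{\O\}$ (seven equal positive stacks require adding tokens) and $\p' \in S_3$ (positive minimum by \rref{rem:props}). The terminal $\O$ is also unreachable, since the six positive stacks are distributed around the circle and any 4-window misses at least two of them. Thus the target must lie in $S_1$ or $S_4$. Place the unique zero at $p_1$ so that $\p = (0, p_2, p_3, p_4, p_5, p_6, p_7)$ with $p_i > 0$ for $i \ge 2$ and $\p \notin S$.

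Next I would classify the permissible locations of the new zero. A target in $S_1$ requires the existing zero to be one of the two adjacent tub minima, so the second zero is created at $p_2$ or $p_7$. A target in $S_4$ requires, by \rref{rem:props}, that the existing zero play the role of $a$ or $f$, whose companions are separated by the single $g$-stack, so the second zero is created at $p_3$ or $p_6$. By the reflective symmetry across $p_1$ I may assume $p_2 \le p_7$ throughout.

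My first attack is through \lref{lem:valley}. Applied to the five-stack windows $(p_3, p_4, p_5, p_6, p_7)$ and $(p_2, p_3, p_4, p_5, p_6)$, the deep- and shallow-valley conditions produce moves into $S_1$ of the form $(0, 0, c, d, e, f, c)$ that satisfy the tri-sum requirement. If neither window is a valley, the negations give explicit inequalities linking the interior sum $p_4 + p_5 + p_6$ to the walls $p_2, p_3, p_7$, and these bounds drive the remaining analysis. I would then branch on the orderings among $\{p_3, p_7\}$, $\{p_2 + p_3, p_6 + p_7\}$, and $\{p_2, p_4 + p_5\}$. In each branch I exhibit either a tailored $S_1$ move (choosing the tub height $c$ to equal the tri-sum achievable within one 4-window, possibly after reducing $p_3$ or $p_7$ down to the new $c$) or an $S_4$ move (placing the new zero at $p_3$ or $p_6$ and selecting the common sum $s$ to be one of $p_2$, $p_7$, $p_4 + p_5$, or $p_6 + p_7$, whichever is compatible with a single 4-window of play).

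The main obstacle is the tightly balanced subcase in which no five-stack valley exists and every natural candidate for the $S_4$ common sum fails to match a pair-sum on the opposite side, so that no naive reduction yields all the required equalities. The resolution should be to turn the hypothesis $\p \notin S$ into a strict inequality that forces at least one side to admit an achievable reduction, and then to combine that reduction with the creation of the second zero within a single 4-window. Verifying that every candidate move remains legal in these tight cases, in particular that no stack is asked to go negative and that the separation conditions of $S_1$ and $S_4$ (e.g.\ the tub equality $x_1 = x_3$ or the peak conditions in \rref{rem:props}) are maintained, will be the central technical difficulty.
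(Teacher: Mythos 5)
Your setup is sound (the zero persists under any move, so the target must lie in $S_1\cup S_4$, matching what the paper does), but there are two genuine problems. First, your classification of where the new zeros may be created is incomplete in a way that breaks the argument: for an $S_1$ target you assert the existing zero must be one of the two adjacent tub minima, so the second zero goes at $p_2$ or $p_7$. In fact the winning move may create the tub \emph{antipodal} to the existing zero, with the existing zero absorbed into the tri-sum $d+e+f$. This is exactly the paper's first subcase, where the two neighbors $x_1,y_1$ of the zero satisfy $x_1+y_1\le\min\{x_2,y_2\}$ and one moves to $(0,x_1,s,0,0,s,y_1)$ with $s=x_1+y_1$. A concrete instance is $\p=(0,1,10,1,1,10,1)\in S^c$: every configuration your scheme permits (new zero at $p_2$, $p_7$, $p_3$, or $p_6$) fails --- the untouched stacks already violate the required sum or positivity conditions in each $4$-window --- while $(0,1,2,0,0,2,1)\in S_1$ is a legal move. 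So the branches you propose to analyze are not exhaustive over winning moves, and at least one regime of positions has no winning move inside your search space.

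Second, even within your framework the proof is not carried out: the passages ``I would then branch on the orderings\dots'', ``In each branch I exhibit\dots'', and especially ``The resolution should be\dots'' describe intentions rather than exhibit moves and verify their legality. The entire content of this lemma is the exhaustive case analysis (the paper needs four main subcases, one of which splits further into several sub-subcases combining Valley Lemma moves to $S_1$ with explicit moves to $S_4$), together with the check in each case that no stack goes negative, that play stays within four consecutive stacks, and that the strict inequalities of $S_4$ hold. As written, the hard ``tightly balanced'' subcase you identify is left open, so the proposal is a plan with a flawed starting classification rather than a proof.
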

 
 \begin{figure}[htb]
    \centering
    \includegraphics[scale=0.40]{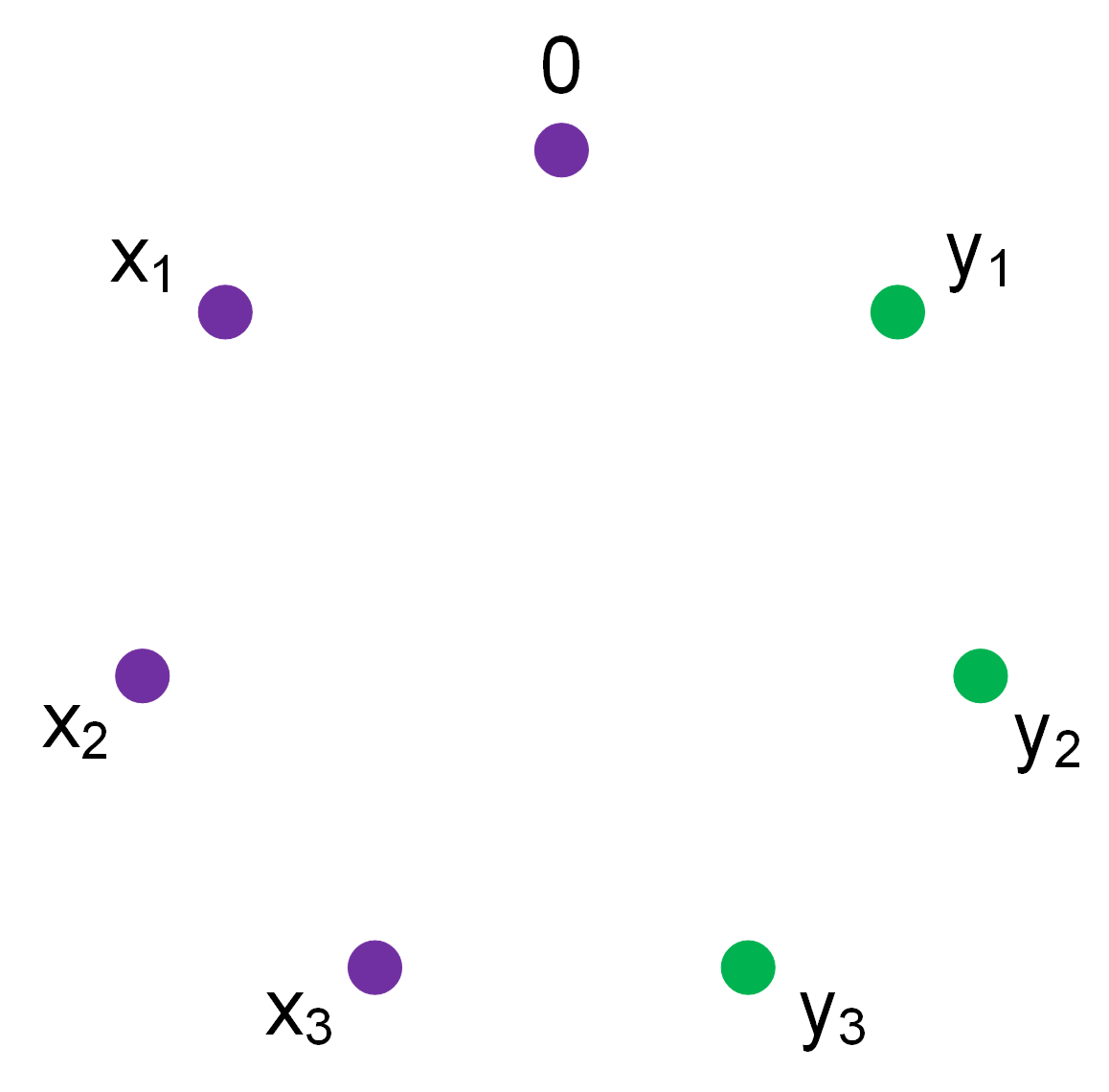}
    \caption{Generic position with a unique zero.}
    \label{fig:isozero}
\end{figure}

 \begin{proof}
 The generic position for this case is shown in Figure~\ref{fig:isozero}. Note that due to the assumption of the unique zero, we have that all other stack heights are non-zero, so $x_i>0$ and $y_i>0$ for $i = 1, 2, 3.$ We may also assume w.l.o.g. that $x_2 \ge y_2$. We will see that in almost all cases, we can move to $S_1$; there is a single subcase where we will move to $S_4$. Table~\ref{tab:isozero} gives a quick overview of the structure of the subcases. 
 
 \renewcommand{\arraystretch}{1.5}
 \begin{table}[h!]
\centering
  \begin{tabular}{|l|l|l|c|} 
\hline
   $x_1+y_1 \le \min\{x_2, y_2\}=y_2$ & \multicolumn{2}{l|}{}& (a) \\ \hline
   \multirow{3}{*}{$x_1+y_1 >y_2$} & $y_2 \ge y_1$& & (b) \\ \cline{2-4}
   &\multirow{2}{*}{$y_2 < y_1$}& $x_2 \ge y_1$ &  (c)
   \\\cline{3-4}
   & & $x_2 < y_1$  & (d)
   \\ \hline
 \end{tabular}
 \vspace{0.1in}
\caption{Subcases for unique zero.}
  \label{tab:isozero}
 \end{table}
 
 \begin{itemize}
 \item[(a)] If $s = x_1+y_1 \le \min\{x_2, y_2\}$, then we can move to $\p'=(0,x_1, s, 0, 0, s, y_1) \in S_1$. 
 \item[(b)] When  $x_1+y_1 > y_2 \ge y_1$,  we have that $y_2y_10x_1x_2$ is a shallow valley and by the Valley Lemma, there is a move to $S_1$. 
 \item[(c)] Since $y_1 > y_2$ implies that $x_1+y_1 > y_2$, the conditions reduce to $y_1 > y_2$, $x_2 \ge y_1$ and $x_2 \ge y_2$. Let $s = \min\{y_1, y_2+y_3+x_3\}$. The goal is to keep stacks $y_2$ and $0$ and then adjust the other stacks according to the value of $s$. 
If $s = y_1$, then we move to $\p'=(0,y_1,y_2,y_3',x_3',s,0)\in S_1$ with $y_2+y_3'+x_3'=s=y_1$,  otherwise, we move to $\p'=(0,y_1',y_2,y_3,x_3,s,0)\in S_1$ with $y_1'=s=y_2+y_3+x_3$. These moves are legal because  $x_2\ge y_1\ge s$ and $y_3'+x_3'=s-y_2=y_1-y_2>0$. 
 \item[(d)] 
The conditions for this case, namely $y_2 < y_1, x_2 < y_1$, and $x_2 \ge y_2$ reduce to $y_2 \le x_2 <y_1$. We distinguish between two main cases, namely whether $x_3+y_3 \le \min\{x_1, y_1\}$ or not. We first consider the case $x_3+y_3 \le \min\{x_1, y_1\}$.
 \begin{itemize}
     \item If  $y_2 < s=x_3+y_3 \le \min\{x_1, y_1\}$, then we can move to $\p'=(0, s-y_2, y_2, y_3, x_3, 0 , s) \in S_4$. Since $\min\{s-y_2,y_2, x_3\}>0$, the conditions of $S_4$ are satisfied. 
     \item If  $s=x_3+y_3 \le y_2\le x_2$, then $x_2x_3y_3y_2y_1$ is either a shallow valley or a deep valley, depending on whether $x_3+y_3+y_2> x_2$ or $x_3+y_3+y_2 \le x_2$, and there is a move to $S_1$.
 \end{itemize}
 Now we look at the second case, $x_3+y_3 > x_1$ or  $x_3+y_3 >y_1$. We show that with this condition alone (disregarding the overall conditions of subcase d), we can show that there is a move to $S_4 \cup S_1$. We can therefore assume,  w.l.o.g, that $x_1 \ge y_1$, and 
consider two subcases, namely $x_1 \ge x_3+y_3 > y_1$ and $x_3+y_3 >x_1$. 
\begin{itemize}
    \item If $x_1 \ge x_3+y_3 > y_1$ and $x_3+y_3 > y_1+y_2$, then we can move to $\p'=(0,y_1,y_2, y_3',x_3',0,s)\in S_4$ with $s=y_1+y_2=y_3'+x_3$. We can adjust the sum $y_3'+x_3'$ such that $x_3'>0$. Also, $\min\{y_1, y_2\}>0$, so the $S_4$ conditions are satisfied. If, on the other hand,  $x_3+y_3 \le  y_1+y_2$, then we can move to $\p'=(0,y_1',y_2, y_3,x_3,0,s)\in S_4$ with $s=x_3+y_3$ and $y_1'=s-y_2>0$ and the $S_4$ conditions are satisfied.
    \item If $ x_3+y_3 > \max\{x_1, y_1\}$ and $x_1\ge y_1+y_2=s$, then we can move to $\p'=(0,y_1,y_2, y_3',x_3',0,s)\in S_4$ with $s=y_1+y_2=y_3'+x_3$. Note that once more, $\min\{x_1, x_3+y_3\}\ge y_1+y_2=s$, so the move is legal. Finally, assume that $y_1+y_2 > x_1=s $. Now we have a move to $\p'=(0,y_1,y_2',y_3',x_3',0,x_1)\in S_4\cup S_1$ with $y_1+y_2'=x_3'+y_3'=s$. Since $y_1 \le s$, we can make the sum $y_1+y_2'=s$, and we can also adjust the sum $x_3'+y_3'$ while keeping $x_3'>0$. If $y_3'=y_2'=0$, then $\p'\in S_1$, otherwise $\p'\in S_4$.
\end{itemize}

\end{itemize} 
\noindent This completes the proof in the case of exactly one zero.
\end{proof}

Finally, we deal with the case when the position $\p$ does not have a zero. In this case, we divide the positions according to where the maximum is located in relation to other maxima (if any). Note that 
when $\min(p)>0$, there is a close relation between positions in $S_3$ and $S_4$. A position $\p=(m,M,m,p_4,p_5,p_6,p_7)$ with $p_4+p_5=p_6+p_7=M+m$ and $\min\{p_4,p_7\}>m$ is in $S_4$ if $\max\{p_5,p_6\}>m$ and is in $S_3$ if $p_5=p_6=m$. Therefore, we will state that there is a move to $S_3\cup S_4$ and need only check on the sum conditions and the minimum condition. This property will be used repeatedly in the Maximum Lemma.\\

\begin{lemma}[Maximum Lemma] Let $\p \in S^c$ with $\min(\p)>0$. Then there is a move from $\p$ to $\p'\in S$.
 \end{lemma}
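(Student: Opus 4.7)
The plan is to verify condition (II) of Theorem~\ref{thm:howtoprove} by exhibiting, for every $\p\in S^c$ with $\min(\p)>0$, an explicit legal move into $S$. Since $\p$ has no empty stack, a move to $S_1$ requires creating two adjacent zeros within a four-stack window, and a move to $S_2$ requires three consecutive stacks of $\p$ already equal to a common value (so the remaining four stacks can be flattened to that value). The most flexible target is therefore $S_3\cup S_4$, which I would reach through the canonical form of the remark preceding the lemma: any $\p'$ of the shape $(m',M',m',p'_4,p'_5,p'_6,p'_7)$ with $p'_4+p'_5=p'_6+p'_7=m'+M'$ and $\min\{p'_4,p'_7\}>m'$ lies in $S_4$, and in $S_3$ when $p'_5=p'_6=m'$. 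The construction thus reduces to choosing a new minmax peak $m'M'm'$, a common sum $s=m'+M'$, and a four-stack play window compatible with $\p$.

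The case split follows the framing of the lemma: subdividing by the location of the maximum $M=\max(\p)$ in relation to any other maxima of $\p$. Rotating so that $a=\min(\p)$ sits at position $1$ and applying the reflective symmetry of \CN{7}{4}, I may assume the nearest copy of $M$ lies at position $2$, $3$, or $4$, giving three top-level cases distinguishing whether $M$ is adjacent to $a$, two stacks away, or opposite. Each case would split further according to the multiplicity of $M$ and the placement of any secondary copies. In every subcase, I would position the new peak so that $M$ (or an appropriately reduced copy of it) plays the role of $M'$ with the stacks flanking it becoming the new minima; the common sum $s$ would be chosen as the smaller of the two inherited pair-sums in $\p$, so that only one of the two sums requires downward adjustment and the four stacks to be played on form a consecutive block. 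If three consecutive stacks of $\p$ happen to equal $a$, a direct flattening move to $S_2$ short-circuits that subcase.

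The main obstacle is the combinatorial bookkeeping across subcases combined with the delicate strict inequalities defining $S_3$ and $S_4$: for every subcase I must verify (i) non-negativity of the induced stack reductions, (ii) the strict peak inequalities $m'<\min\{p'_4,p'_7\}$ and, for $S_4$, $m'<\max\{p'_5,p'_6\}$, and (iii) that the four stacks needing adjustment are genuinely consecutive around the circle. When the candidate sum $s$ is too small to preserve these strict inequalities, I would fall back to a move into $S_1$ by driving an adjacent pair of stacks to $0$, often enabled by a Valley-Lemma style inequality on three of the four stacks in the play window, or into the degenerate $S_3$ branch of the canonical form ($p'_5=p'_6=m'$). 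Showing that at least one of these options is always available under the standing assumption $\p\in S^c$ will be the heart of the proof, and I expect the hardest configurations to be those with two or more copies of $M$ arranged so that no peak placement leaves both inherited sums above the natural target value of $s$.
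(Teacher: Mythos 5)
Your high-level strategy coincides with the paper's: split on the location of $\max(\p)$ relative to other maxima, aim for $S_3\cup S_4$ via the canonical peak form $(m',M',m',p_4',p_5',p_6',p_7')$ with common sums $m'+M'$, choose the target sum as a minimum of candidate sums so that only one pair needs downward adjustment, and fall back on the Valley Lemma (a move to $S_1$) when that fails. The problem is that this is where your write-up stops: you have produced a plan, not a proof. The entire content of the paper's argument is the subcase-by-subcase verification --- roughly eight subcases across the antipodal and non-antipodal configurations, each requiring explicit checks that the four adjusted stacks are consecutive, that every adjustment is a decrease, and that the strict inequalities of $S_3$ and $S_4$ survive. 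You explicitly defer exactly this (``the heart of the proof''), and it is precisely there that the lemma can fail or succeed; no subcase is actually discharged.

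Two concrete points where the plan as stated would need repair before it could be executed. First, taking $s$ to be ``the smaller of the two inherited pair-sums'' is not sufficient: the common sum must also not exceed the achievable peak sum $m'+M'$, since the peak stacks can only be lowered. The paper therefore minimizes over \emph{three} quantities, e.g.\ $s=\min\{M+x_1,\,x_2+x_3,\,y_2+y_3\}$ in the non-antipodal case, and the subcase $s=M+x_1$ (peak sum is the binding constraint) requires its own argument, including verifying $\min\{x_2,y_2\}>x_1$ from the hypothesis $M>\max\{x_3,y_3\}$. Second, the fallback to $S_1$ is not automatic: creating two adjacent zeros and satisfying the tri-sum condition requires a deep- or shallow-valley configuration, and one must prove that such a configuration exists exactly in the regimes where the $S_3\cup S_4$ move breaks down (the paper does this, e.g., when $y_2+y_3\le M$ or when $y_3\ge x_2+x_3$). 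Finally, your anticipated hard case (multiple copies of $M$) is indeed singled out in the paper as the antipodal case, which needs a further split on whether $x_1+x_2$ is below $M$, between $M$ and $M+y$, or above $M+y$, with a different target set ($S_1$, $S_3$, $S_4$ respectively) in each regime --- none of which is visible from your outline.
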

 
 \begin{proof}
 Let $M=\max(\p)$. We will first look at the antipodal case, where we have two maxima opposite of each other. The generic position is $\p=(x_1,x_2, M, y_3,y_2,y_1, M)$, shown in Figure~\ref{fig:apmax2M}.  
 
 \begin{figure}[ht]
     \centering
     \begin{subfigure}[b]{0.4\textwidth}
         \centering
         \includegraphics[width=0.75\textwidth]{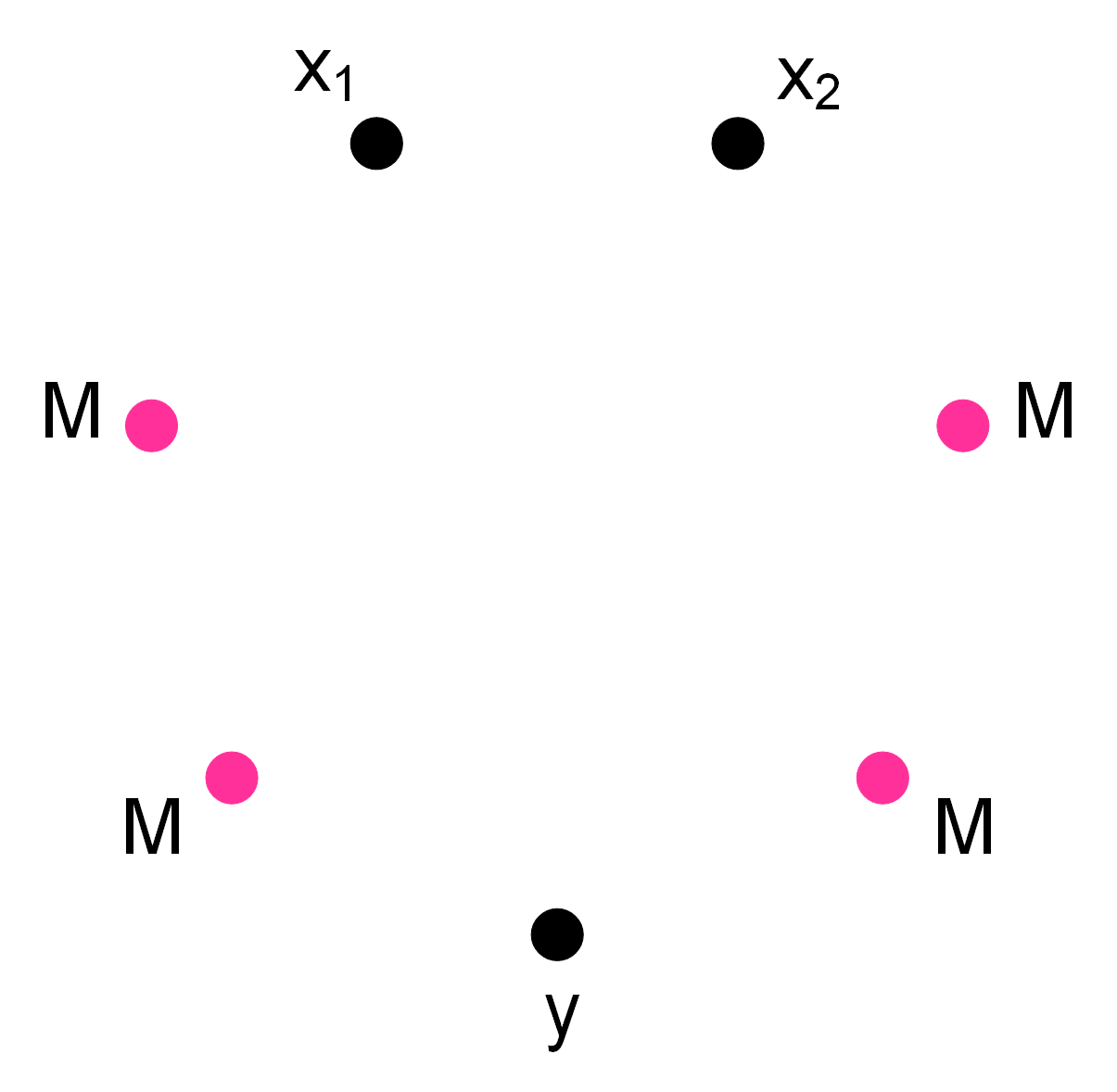}
         \caption{}
         \label{fig:apmax4M}
     \end{subfigure}
     \hspace{0.05\textwidth}
     \begin{subfigure}[b]{0.4\textwidth}
         \centering
         \includegraphics[width=0.75\textwidth]{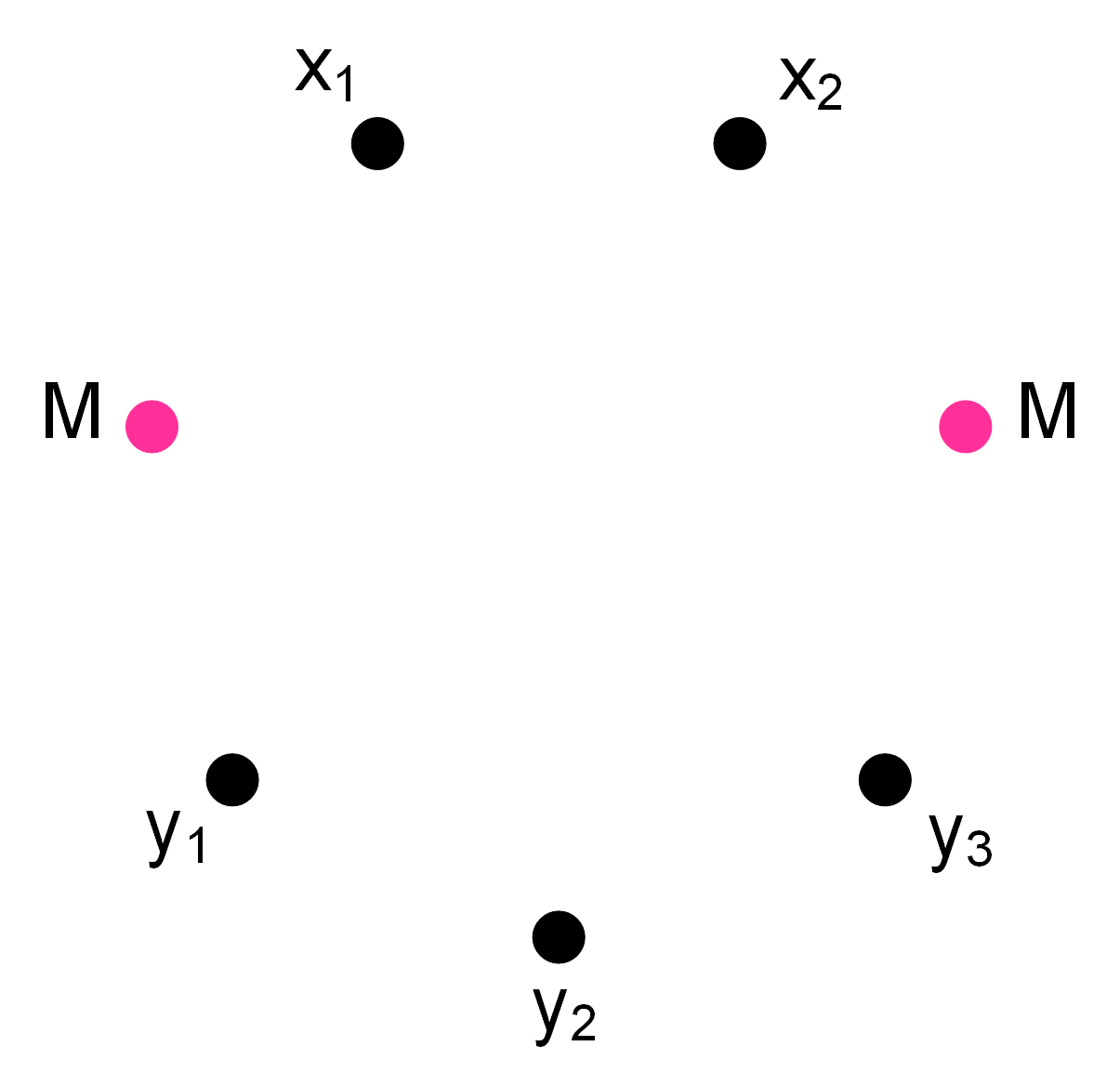}
         \caption{}
         \label{fig:apmax2M}
     \end{subfigure}
     
        \caption{Generic positions for antipodal maxima. (a) $y_3 = M$ and (b) $y_3 < M$.}
        \label{fig:apmax}
\end{figure}

 Table~\ref{tab:apmax} shows the subcases we will consider for  antipodal maxima. Without loss of generality, we may assume that $y_3 \le y_1$.

  \renewcommand{\arraystretch}{1.5}
 \begin{table}[h!]
\centering
  \begin{tabular}{|l|l|l|c|} 
\hline
   $y_3=M$ & \multicolumn{2}{l|}{}& (a) \\ \hline
   \multirow{3}{*}{$y_3<M$} & $y_2+y_3\le M$& & (b1) \\ \cline{2-4}
   &\multirow{2}{*}{$y_2+y_3 > M$}& $x_1 \ge x_2$ &  (b2)
   \\\cline{3-4}
   & & $x_1 < x_2$  & (b3)
   \\ \hline
 \end{tabular}
 \vspace{0.1in}
\caption{Subcases for antipodal maxima.}
  \label{tab:apmax}
 \end{table}

\begin{itemize}
    \item[(a)] We start with the case $M=y_3=M \le y_1$ shown in Figure~\ref{fig:apmax4M}. In this case, the generic position becomes $\p=(x_1,x_2,M,M,y,M,M)$, where we have dropped the $y$ subscript for ease of notation. We may also assume in this case that w.l.o.g., $x_1 \le x_2$. If $x_1+x_2 < M$, then $Mx_1x_2MM$ forms a shallow valley and there is a move to $S_1$. Now assume that $M \le x_1+x_2 \le M+y$. In this case, there is a move to $\p'=(x_1,x_2,x_1,M,x_1+x_2-M,x_1+x_2-M,M ) \in S_3$. We can make the necessary adjustments since $x_1 \le M=\max(\p)$, and $M \ge y \ge x_1+x_2-M \ge 0 $   by assumption. Finally, when $M+y < x_1+x_2$, then we can move to $\p'=(x_1',x_2',y,M,y,M,y)\in S_4$, with $x_1'+x_2'=M+y$. Note that $M+y< x_1+x_2$ implies that $M>y$. We need to show that we can adjust the $x_1$ and $x_2$ stacks such that $x_1'>y$ and $x_2'>y$ to satisfy the $S_4$ conditions. This is possible since $x_1+x_2>M+y\ge y+1+y=2y+1$.
      \end{itemize}
      We now assume that $M>y_3$ (see Figure~\ref{fig:apmax2M}) and consider the various subcases listed in Table~\ref{tab:apmax}.
      \begin{itemize}
    \item[(b1)] Since $M \ge y_2+y_3$, position \p~is either shallow valley (if $y_1+y_2+y_3 >M$) or deep valley (if $y_1+y_2+y_3  \le M$), so there is a move to $\p' \in S_1$. 
    \end{itemize}
    \noindent Now let $s = \min\{y_2+y_3,M+x_1, M+x_2\}$.
    \begin{itemize}
    \item[(b2)]  If $s = y_2+y_3$ or $s = M+x_2$, then there is a move to $\p' = (s-M, s - M, M, y_3, y_2',y_3, M) \in S_3$ with $y_2' = s- y_3$. Note that in either case, we only play on four stacks. If $s = y_2+y_3$, then $s \le M+x_2 \le M+x_1$, so $s-M\le \min\{x_1,x_2\}$ and and $y_3 \le y_1$ by assumption. Also, $y_2'=s-y_3=y_2$, so play is  on the $x_2, x_1, M,$ and $y_3$ stacks. Since $M>y_3$, we have that $s-M=y_2-(M-y_3)<y_2$ as needed for positions in $S_3$.
    \item[(b3)] If $s = M+x_1$, then  $M+x_1 \le y_2+y_3$. We move to $\p'=(x_1,x_2, s-x_2,y_3',y_2',x_1,M)\in S_3\cup S_4$ with $y_2'+y_3'=M+x_1=s$, playing on the one of the $M$ stacks and the $y_i$ stacks. This move is legal because $y_1 \ge y_3 \ge M+x_1 - y_2 \ge x_1$ and $s-x_2 = M+x_1-x_2<M$. Left to show is that $\min\{x_2,y_2'\}>x_1$. By assumption of this case, $x_2 > x_1$, and $0<M-y_3\le y_2-x_1$ shows that we can satisfy the sum condition with $y_2'>x_1$.
\end{itemize} 
\noindent This completes the case of antipodal maxima. We now consider the case when $M > \max\{x_3, y_3\}$, so the stacks that are ``opposite'' of $M$ have strictly smaller height. Our generic position is shown in Figure~\ref{fig:nonapmax}. W.l.o.g., we may assume that $x_1 \le y_1$. Once more we move to either $\p'\in S_1$ or $\p' \in S_3\cup S_4$.  

\begin{figure}[htb]
    \centering
    \includegraphics[scale=0.45]{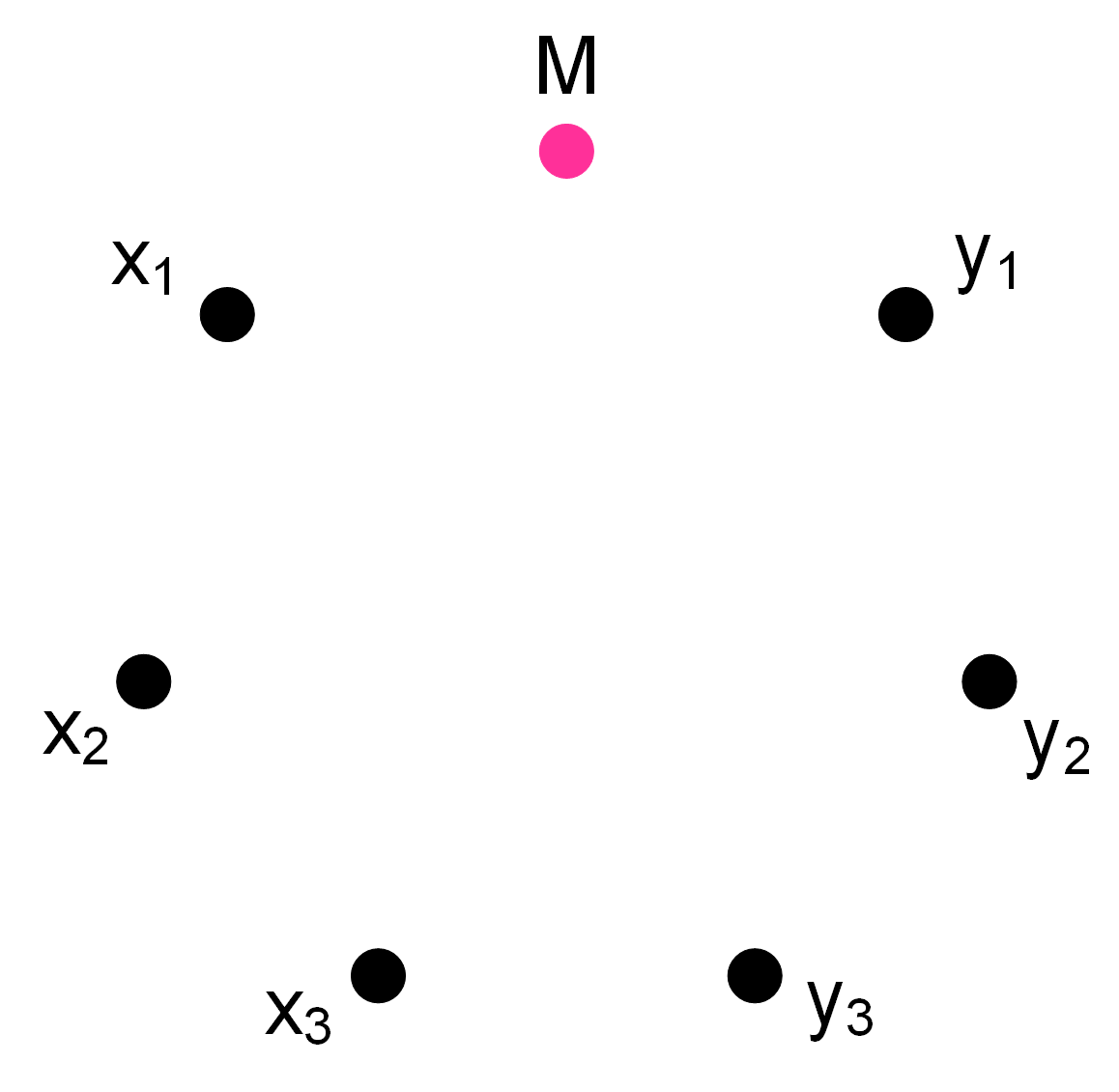}
    \caption{Generic position when $M > \max\{x_3, y_3\}$.}
    \label{fig:nonapmax}
\end{figure}

Let $s = \min\{M+x_1,x_2+x_3, y_2+y_3\}.$  

\begin{itemize}
    \item If $s= M+x_1$, then we can move to  $\p'=(M, x_1, y_2, y_3',x_3', x_2, x_1) \in S_3 \cup S_4$, with $x_2+x_3'=y_2+y_3'=M+x_1$. Play is on the $y_i$ stacks and $x_3$; the move is legal because $x_1 \le y_1$ by assumption, $x_3'=M+x_1-x_2 \le x_3$, and $x_3' >0$ since $M=\max(\p)$ and all stack heights are positive. Likewise, $0 < y_3' \le y_3$. Left to show is that $\min\{x_2,y_2\}>x_1$. By assumption, $M>\max\{x_3, y_3\}$ which implies both  $0<M-x_3\le x_2-x_1$ and $0<M-y_3\le y_2-x_1$, so the move is legal. 
  \item If $s= x_2+x_3$ and $y_3 \ge s = x_2+x_3$, then $M>y_3$ implies that $\p$ is either shallow valley (if $ y_3 < x_1+x_2+x_3$) or deep valley (if $ y_3 \ge x_1+x_2+x_3$). If  $y_3 < s = x_2+x_3<M+x_1$, then we move to $\p'=(M',m',y_2',y_3,x_3,x_2,m') \in S_4$ with overlap stack $M'$ and $y_2'+y_3=s$, where $M'=s, m' = 0$ if $M \ge s$ and $M'=M, m' = s-M$ otherwise. Let's check that this move is legal. If $M \ge s$, then we can clearly create the $M'$ and $m'$ stacks. If $M < s$, then $m'=s-M >0$ and $s-M<x_1\le y_1$, so that adjustment is legal. Next we consider the $y_2$ stack. Since $y_3 < s$ and $y_3<M$, then $y_2'=s-y_3>\min\{0,s-M\}$, so $y_2'>m'\ge0$. Last but not least, $x_2>0$ (by assumption of no zero stacks) and $x_2>x_2+x_3-M=s-M$ since $x_3<M$, so $x_2 > m'$. 
  \item If $s= y_2+y_3$, then the same arguments apply as in the case $s= x_2+x_3$, with the roles of $x$ and $y$ interchanged except for the inequality that $s <M+x_1$. 
  \end{itemize}
This completes the proof of the max lemma. 
 \end{proof}

With these three lemmas under our belt, we have proved Proposition~\ref{prop:wintolose}, because each position either has multiple zeros, a unique zero, or no zero. In each case, we have shown that there is a legal move from $\p \in S^c$ to $\p' \in S$. Together with Proposition~\ref{prop:noloselose} and Theorem~\ref{thm:howtoprove}, we have shown that the set $S$ of Theorem~\ref{thm:CN(7,4)} is the set of $\P$-positions of \CN{7}{4}.

\section{Discussion}

Our goal in the investigations of \CN{n}{k} has always been to find a general structure of the \P-positions for families of games. So far we have found such results for \CN{n}{1}, \CN{n}{n}, and \CN{n}{n-1} (see~\cite{mDsH09}). In addition, in all known results for \CN{n}{k}, we have been able to find a single description of the \P-positions. The case of \CN{7}{4} is seemingly an anomaly in that \sout{we had} four different sets \sout{that} make up the \P-positions. However, looking at the \P-positions of \CN{3}{2}, \CN{5}{3}, and \CN{7}{4}, which are all examples of \CN{2\ell+1}{\ell+1}, we found one commonality. Recall that the \P-positions of \CN{3}{2} are given by $\{a,a,a\}$ for $a \ge 0$, and the \P-positions of \CN{5}{3} are given by $\{(x,0,x,a,b)|x=a+b\}$. This leads to the following result.

\begin{lemma}\label{lem:genppos} In the game  \CN{2\ell+1}{\ell+1}, the set of 
 $\P$-positions contains the set $S_1$, where 
$$ S_1=\{\p=(x,\underbrace{0,\ldots,0}_{\ell-1},x,a_1,\ldots,a_{\ell})|\sum_{i=1}^{\ell} a_i=x\}. $$
\end{lemma}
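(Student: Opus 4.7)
The plan is to prove $S_1 \subseteq \P$ by induction on the total number of tokens in $\p$, using a second-player reply strategy. The base case is $x = 0$, which gives $\p = \O$, a $\P$-position by definition. For the inductive step I fix $\p \in S_1$ with $x > 0$ and show that from every legal option $\p'$ of $\p$ there is a reply $\p' \to \p'' \in S_1$; because $\p''$ has strictly fewer tokens than $\p$, the inductive hypothesis makes $\p''$ a $\P$-position, so $\p'$ is an $\N$-position and hence $\p$ is a $\P$-position. Label the stacks cyclically with $p_1 = p_{\ell+1} = x$, zeros at positions $2, \ldots, \ell$, and the $a_i$'s at positions $\ell+2, \ldots, 2\ell+1$. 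Every block of $\ell+1$ consecutive stacks has a complement of only $\ell$ stacks, so each such block must contain at least one of $p_1, p_{\ell+1}$. This partitions the opponent's possible moves into three types: (Type 1) the block is $\{p_1, \ldots, p_{\ell+1}\}$; (Type 2) the block contains $p_{\ell+1}$ but not $p_1$; (Type 3) the block contains $p_1$ but not $p_{\ell+1}$.

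For each type I construct the reply explicitly. In Type 1, write the option as $(x_L', 0^{\ell-1}, x_R', a_1, \ldots, a_\ell)$ with $\min(x_L', x_R') < x$, set $y = \min(x_L', x_R')$, and play on $\{p_{\ell+1}, \ldots, p_{2\ell+1}\}$ or its rotation $\{p_{\ell+2}, \ldots, p_{2\ell+1}, p_1\}$ to bring whichever of $x_L', x_R'$ exceeds $y$ down to $y$ while reducing the $a_i$'s from sum $x$ to sum $y$. In Type 2, let $x_R'$ and $s = \sum a_i'$ denote the new values after the opponent's move; because at least one strict reduction occurred, the target $y := \min(x_R', s)$ satisfies $y < x$. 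If $y = x_R'$, reply on $\{p_{\ell+2}, \ldots, p_{2\ell+1}, p_1\}$ to reduce $x_L$ from $x$ down to $x_R'$ and the $a_i$'s to sum $x_R'$; if instead $y = s < x_R'$, reply on $\{p_1, \ldots, p_{\ell+1}\}$ to reduce both $x$-stacks to $s$, leaving the $a_i$'s alone. Type 3 is the mirror image of Type 2 with the roles of $p_1$ and $p_{\ell+1}$ swapped.

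The main obstacle is verifying that each reply is a legal move, namely that no stack is raised and at least one is strictly lowered. The feasibility of reducing the $a_i$'s to the required target sum is automatic because in every subcase the current sum is at least the target. The delicate point is the Type 2 subcase $y = s < x_R'$, where the strategy forces me to touch both $x$-stacks through the block $\{p_1, \ldots, p_{\ell+1}\}$ even though the opponent never touched $x_L$; here one must observe that $s < x$ holds automatically (otherwise the opponent's move would have left every stack unchanged), making the reduction of $x_L$ strict. The geometric feature underpinning feasibility is that the three blocks $\{p_1, \ldots, p_{\ell+1}\}$, $\{p_{\ell+1}, \ldots, p_{2\ell+1}\}$, and $\{p_{\ell+2}, \ldots, p_{2\ell+1}, p_1\}$ are precisely the available reply blocks, and they cover the three types of opponent moves one-to-one.
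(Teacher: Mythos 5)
Your proof is correct and is essentially the paper's argument in unpacked form: the paper observes that the three blocks $\{p_1\}$, $\{p_{\ell+1}\}$, $\{p_{\ell+2},\dots,p_{2\ell+1}\}$ make any position with $\ell-1$ consecutive zeros \CN{3}{2}-equivalent (a move can touch at most two of the three set sums, and the second player re-equalizes them), which is exactly your three-type case split together with the explicit replies. The only difference is presentational: you ground the recursion with an explicit induction on token count and verify legality of each reply by hand, where the paper delegates both to the known solution of \CN{3}{2}.
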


\begin{proof}
Note that all positions in \CN{2\ell+1}{\ell+1} that have $\ell-1$ consecutive zeros are $\CN{3}{2}$-equivalent with sets $\{p_1\}$,$\{p_{\ell+1}\}$ and $\{p_{\ell+2},\ldots,p_{2\ell+1}\}$.  Those in $S_1$ are precisely the  $\CN{3}{2}$-equivalent \P-positions. Therefore, we cannot make a move from $S_1$ to $S_1$ because this would amount to a move from a \P-position in $\CN{3}{2}$ to another \P-position in $\CN{3}{2}$. On the other hand, we can make a \CN{3}{2} winning move into $S_1$ from any position in \CN{2\ell+1}{\ell+1} that has $\ell-1$ consecutive zeros. Therefore, $S_1$ must be a subset of the \P-positions of \CN{2\ell+1}{\ell+1}.
\end{proof}

While Lemma~\ref{lem:genppos} does not settle the question regarding the set of $\P$-positions of the family of games \CN{2\ell+1}{\ell+1}, the result shows that the set $S_1$ for \CN{7}{4}, which has the requirement of the zero minima, is not an anomaly, but a fixture among the $\P$-positions of this family of games. Note that for \CN{3}{2} and \CN{5}{3}, the set of \P-positions equals $S_1$. These two games are too small to show the more general structure of the \P-positions of this family. The question arises whether there are generalizations of the other components of the \P-positions of \CN{7}{4} that play a part of the \P-positions in this family. The obvious candidate would be $S_2$, with all equal stack heights. Interestingly enough, this set is NOT a part of the \P-positions (except for the terminal position) of \CN{9}{5}. For example, the position $(2,2,2,2,2,2,2,2,2)$ is an \N-position of \CN{9}{5}.

\vspace{0.2in}

{\bf Acknowledgements} We would like to thank Kenneth A. Regas for the creation of the nice figures.

\bibliographystyle{unsrt}

\end{document}